\def\bZ{\mathbb{Z}}
\def\bQ{\mathbb{Q}}
\def\bR{\mathbb{R}}
\def\bC{\mathbb{C}}
\def\bP{\mathbb{P}}
\def\bL{\mathbb{L}}
\def\Z{\mathbb{Z}}
\def\R{\mathbb{R}}
\def\i{\mathbf{i}}
\def\C{\mathbb{C}}
\newtheorem{thm}{Theorem}
\newtheorem{lem}[thm]{Lemma}
\newtheorem{prop}[thm]{Proposition}
\newtheorem{defn}[thm]{Definition}
\newtheorem{nb}[thm]{Remark}
\newtheorem{conj}[thm]{Conjecture}
\numberwithin{equation}{section}
\begin{document}

\title[Enumerative meaning of mirror maps for toric CY]{Enumerative meaning of mirror maps\\ for toric Calabi-Yau manifolds}
\author[Chan]{Kwokwai Chan}
\address{Department of Mathematics\\ The Chinese University of Hong Kong\\ Shatin \\ Hong Kong}
\email{kwchan@math.cuhk.edu.hk}
\author[Lau]{Siu-Cheong Lau}
\address{Department of Mathematics\\ Harvard University\\ One Oxford Street\\ Cambridge \\ MA 02138\\ USA}
\email{s.lau@math.harvard.edu}
\author[Tseng]{Hsian-Hua Tseng}
\address{Department of Mathematics\\ Ohio State University\\ 100 Math Tower, 231 West 18th Ave. \\ Columbus \\ OH 43210\\ USA}
\email{hhtseng@math.ohio-state.edu}
\subjclass[2010]{Primary 14N35, 53D45; Secondary 14J33, 53D37, 53D12}
\keywords{Open Gromov-Witten invariants, mirror maps, GKZ systems, toric manifolds, Calabi-Yau, mirror symmetry}

\begin{abstract}
We prove that the inverse of a mirror map for a toric Calabi-Yau manifold of the form $K_Y$, where $Y$ is a compact toric Fano manifold, can be expressed in terms of generating functions of genus 0 open Gromov-Witten invariants defined by Fukaya-Oh-Ohta-Ono \cite{FOOO10}. Such a relation between mirror maps and disk counting invariants was first conjectured by Gross and Siebert \cite[Conjecture 0.2 and Remark 5.1]{GS11} as part of their program, and was later formulated in terms of Fukaya-Oh-Ohta-Ono's invariants in the toric Calabi-Yau case in \cite[Conjecture 1.1]{CLL12}.
\end{abstract}

\maketitle

\tableofcontents

\section{Introduction}

Let $X$ be an $n$-dimensional toric Calabi-Yau manifold, i.e. a smooth toric variety with trivial canonical line bundle $K_X\simeq \mathcal{O}_X$. Such a manifold is necessarily noncompact. Let $N=\bZ^n$. Then $X=X_\Sigma$ is defined by a fan $\Sigma$ in $N_\bR=N\otimes_\bZ\bR=\bR^n$. Let $v_0,v_1,\ldots,v_{m-1}\in N$ be the primitive generators of the 1-dimensional cones of $\Sigma$. Without loss of generality, we assume that, for $i=0,1,\ldots,m-1$,
$$v_i=(w_i,1)\in N$$
for some $w_i\in\bZ^{n-1}$ and $w_0=0$. Also, following Gross \cite{G01}, we assume that the fan $\Sigma$ has convex support so that $X$ is a crepant resolution of an affine toric variety with Gorenstein canonical singularities.

The Picard number of $X$ is equal to $l:=m-n$. Let $\{p_1,\ldots,p_l\}$ be a nef basis of $H^2(X;\bZ)$ and let $\{\gamma_1,\ldots,\gamma_l\}\subset H_2(X;\bZ)\cong\bZ^l$ be the dual basis. Each $2$-cycle $\gamma_a$ corresponds to an integral relation
$$\sum_{i=0}^{m-1}Q^a_iv_i=0,$$
where $Q^a:=(Q^a_0,Q^a_1,\ldots,Q^a_{m-1})\in\bZ^m$. We equip $X$ with a toric symplectic structure $\omega$ and regard $(X,\omega)$ as a K\"ahler manifold. We also complexify the K\"ahler class by adding a B-field $\mathbf{i}B\in H^2(X,\mathbf{i}\bR)$ and setting $\omega_\bC = \omega + \mathbf{i}B$.

An important class of examples of toric Calabi-Yau manifolds is given by the total spaces of the canonical line bundles $K_Y$ over compact toric Fano manifolds $Y$, e.g. $K_{\bP^2}=\mathcal{O}_{\bP^2}(-3)$.

In \cite{CLL12}, Leung and the first two authors of this paper study local mirror symmetry for a toric Calabi-Yau manifold $X$ from the viewpoint of the SYZ conjecture \cite{SYZ96}. Starting with a special Lagrangian torus fibration (the Gross fibration) on $X$, we construct the {\em SYZ mirror} of $X$ using $T$-duality modified by instanton corrections and wall-crossing, generalizing the constructions of Auroux \cite{Au07, Au09}. The result is given by the following family of noncompact Calabi-Yau manifolds \cite[Theorem 4.37]{CLL12} (see also \cite[Section 7]{AAK12}):
\begin{equation}\label{SYZ}
\check{X}=\left\{(u,v,z_1,\ldots,z_{n-1})\in\bC^2\times(\bC^\times)^{n-1} \mid uv=\sum_{i=0}^{m-1} (1+\delta_i(q))C_iz^{w_i}\right\},
\end{equation}
where
$$\delta_i(q)=\sum_{\alpha\in H_2^{\textrm{eff}}(X,\bZ)\setminus\{0\}}n_{\beta_i+\alpha}q^\alpha$$
is a generating function of disk open Gromov-Witten invariants. Here, $z^w$ denotes the monomial $z_1^{w^1}\ldots z_{n-1}^{w^{n-1}}$ if $w=(w^1,\ldots,w^{n-1})\in\bZ^{n-1}$;
$H_2^{\textrm{eff}}(X,\bZ)$ is the cone of effective classes;
$q^\alpha$ denotes $\exp(-\int_\alpha \omega_\bC)$ and can be expressed in terms of the complexified K\"ahler parameters
\begin{align*}
q_a = \exp\left(-\int_{\gamma_a}\omega_\bC\right),\quad a=1,\ldots,l;
\end{align*}
the coefficients $C_i\in \bC$ ($i=0,\ldots,m-1$) are related to the complexified K\"ahler parameters $q_a$'s by
$$q_a=\prod_{i=0}^{m-1} C_i^{Q^a_i};$$
$\beta_i\in\pi_2(X,L)$ are classes of the basic disks bounded by a Lagrangian torus fiber $L$, and the coefficients $n_{\beta_i+\alpha}=n_{\beta_i+\alpha}(X,L)$ are 1-pointed genus 0 open Gromov-Witten invariants defined by Fukaya-Oh-Ohta-Ono \cite{FOOO10}  (see Subsection~\ref{subsec:opGW} for more precise definitions).

Notice that the SYZ mirror family (\ref{SYZ}) is entirely written in terms of symplectic-geometric data of $X$. Another striking feature is that (\ref{SYZ}) is expected to be inherently written in canonical flat coordinates. This was first conjectured by Gross and Siebert \cite[Conjecture 0.2 and Remark 5.1]{GS11} where they predicted that period integrals of the mirror can be interpreted as counting of tropical disks (instead of holomorphic disks) in the base of an SYZ fibration for a {\em compact} Calabi-Yau manifold; see also \cite[Example 5.2]{GS11a} where Gross and Siebert observed a relation between the so-called {\em slab functions} which appeared in their program and period computations for $K_{\bP^2}$ in \cite{GZ02}. In \cite{CLL12}, a more precise form of this conjecture for toric Calabi-Yau manifolds, which we recall and clarify below, is stated in terms of the genus 0 open Gromov-Witten invariants $n_{\beta_i+\alpha}$.

Let $\Delta\subset N\otimes_\bZ\bR$ be the convex hull of the finite set $\Sigma(1)=\{v_0,v_1,\ldots,v_{m-1}\}\subset N$. Then $\Delta$ is an $(n-1)$-dimensional integral polytope, which can also be viewed as the convex hull of $\{w_0,w_1,\ldots,w_{m-1}\}$ in $\bZ^{n-1}$. Notice that $$\Delta\cap\bZ^{n-1}=\{w_0,w_1,\ldots,w_{m-1}\}$$
since $X$ is smooth. Denote by $\bL(\Delta) \cong \bC^m$ the space of Laurent polynomials $f\in \bC[z_1^{\pm1},\ldots,z_{n-1}^{\pm1}]$ of the form $\sum_{i=0}^{m-1} A_i z^{w_i}$ (i.e. those with Newton polytope $\Delta$). Also let $\bP_\Delta$ be the projective toric variety defined by the normal fan of $\Delta$.

A Laurent polynomial $f\in\bL(\Delta)$ and hence the associated affine hypersurface $Z_f:=\{(z_1,\ldots,z_{n-1})\in(\bC^\times)^{n-1} \mid f(z_1,\ldots,z_{n-1})=0\}$ in $(\bC^\times)^{n-1}$ is called {\em $\Delta$-regular} \cite{B93} if the intersection of the closure $\bar{Z}_f\subset\bP_\Delta$ with every torus orbit $O\subset\bP_\Delta$ is a smooth subvariety of codimension one in $O$. Denote by $\bL_\textrm{reg}(\Delta)$ the space of all $\Delta$-regular Laurent polynomials. The algebraic torus $(\bC^\times)^n$ acts on $\bL_\textrm{reg}(\Delta)$ by
\begin{align*}
(\bC^\times)^n \times \bL_\textrm{reg}(\Delta) & \to \bL_\textrm{reg}(\Delta),\\
(\lambda_0,\lambda_1,\ldots,\lambda_{n-1})\cdot f(z_1,\ldots,z_{n-1}) & = \lambda_0 f(\lambda_1z_1,\ldots,\lambda_{n-1}z_{n-1}).
\end{align*}

Following Batyrev \cite{B93} and Konishi-Minabe \cite{KM10}, we define the complex moduli space $\mathcal{M}_\bC(\check{X})$ of the mirror Calabi-Yau manifold $\check{X}$ to be the GIT quotient of $\bL_\textrm{reg}(\Delta)$ by this action. Since $0$ is inside the interior of $\Delta$, the moduli space $\mathcal{M}_\bC(\check{X})$ is nonempty and has (complex) dimension $l=m-n$ \cite{B93}. Also, as the integral relations among the lattice points $\Delta\cap\bZ^{n-1}=\{w_0,w_1,\ldots,w_{m-1}\}$ are generated by $\sum_{i=1}^{m-1} Q^a_iw_i=0$ where $\sum_{i=0}^{m-1} Q^a_i=0$, in fact we can write down the local coordinates on $\mathcal{M}_\bC(\check{X})$ explicitly as
\begin{align*}
y_a = \prod_{i=0}^{m-1} A_i^{Q^a_i},\quad a=1,\ldots,l.
\end{align*}
The moduli space $\mathcal{M}_\bC(\check{X})$ parametrizes a family of open Calabi-Yau manifolds $\check{\mathfrak{X}}\to \mathcal{M}_\bC(\check{X})$ defined by
$$\check{X}_y = \left\{(u,v,z_1,\ldots,z_{n-1})\in\bC^2\times(\bC^\times)^{n-1} \mid uv = \sum_{i=0}^{m-1} A_i z^{w_i} \right\},$$
where $y_a = \prod_{i=0}^{m-1} A_i^{Q^a_i}$ for $a=1,\ldots,l$. This was the mirror family originally predicted via physical arguments \cite{CKYZ99, HIV00, GZ02}.

Classically, a mirror map
\begin{align*}
\psi:\mathcal{M}_\bC(\check{X}) & \to \mathcal{M}_K(X),\\
y=(y_1,\ldots,y_l) & \mapsto \psi(y)=(q_1(y),\ldots,q_l(y))
\end{align*}
from the complex moduli space $\mathcal{M}_\bC(\check{X})$ of the mirror to the complexified K\"ahler moduli
$$ \mathcal{M}_K(X):= \{ \exp (\omega + \i B): \omega \in K(X), B \in H^2(X,\R) \}$$ (where $K(X)$ denotes the K\"ahler cone) is defined by period integrals
\begin{align*}
q_a(y)=\exp\left(-\int_{\Gamma_a}\check{\Omega}_y\right),\quad a=1,\ldots,l,
\end{align*}
over integral cycles $\Gamma_1,\ldots,\Gamma_l$ which constitute part of an integral basis of the middle homology $H_n(\check{X}_y;\bZ)$. Here, $\check{\Omega}_y$ is the holomorphic volume form
\begin{align*}
\check{\Omega}_y = \textrm{Res}\left(\frac{1}{uv-\sum_{i=0}^{m-1} A_i z^{w_i}}
\frac{dz_1}{z_1}\wedge\cdots\wedge\frac{dz_{n-1}}{z_{n-1}}\wedge du\wedge dv\right)
\end{align*}
on $\check{X}_y$. A mirror map gives a local isomorphism from $\mathcal{M}_\bC(\check{X})$ to $\mathcal{M}_K(X)$ near $y=0$ and $q=0$, and hence provides canonical flat (local) coordinates on $\mathcal{M}_\bC(\check{X})$.

Based on our mirror construction, we define a map in the reverse direction:
\begin{defn}\label{defn:SYZ_map}
We define the {\em SYZ map}
\begin{align*}
\phi:\mathcal{M}_K(X) & \to \mathcal{M}_\bC(\check{X}),\\
q=(q_1,\ldots,q_l) & \mapsto \phi(q)=(y_1(q),\ldots,y_l(q)),
\end{align*}
by
\begin{align*}
y_a(q)=q_a\prod_{i=0}^{m-1}(1+\delta_i(q))^{Q_i^a},\quad a=1,\ldots,l.
\end{align*}
\end{defn}

Then we have the following conjecture:
\begin{conj}[Conjecture 1.1 in \cite{CLL12}]\label{can_coords}
There exist integral cycles $\Gamma_1,\ldots,\Gamma_l$ forming part of an integral basis of the middle homology $H_n(\check{X}_y;\bZ)$ such that
\begin{align*}
q_a=\exp\left(-\int_{\Gamma_a}\check{\Omega}_{\phi(q)}\right)\textrm{ for $a=1,\ldots,l$,}
\end{align*}
where $\phi(q)$ is the SYZ map defined in Definition~\ref{defn:SYZ_map} in terms of generating functions $1+\delta_i(q)$ of the genus 0 open Gromov-Witten invariants $n_{\beta_i+\alpha}$. In other words, the SYZ map coincides with the inverse of a mirror map.
\end{conj}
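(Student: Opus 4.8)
The plan is to compute both sides of the claimed identity as explicit power series and to match them. The conjecture asserts that $\phi=\psi^{-1}$, equivalently that the combinatorially-defined SYZ coordinate $y_a(q)=q_a\prod_{i=0}^{m-1}(1+\delta_i(q))^{Q_i^a}$ coincides with the inverse mirror map coordinate obtained from the periods $\int_{\Gamma_a}\check\Omega_y$. So the two tasks are to pin down the periods and to pin down the open Gromov-Witten generating functions, and then to see that they agree.

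First I would analyze the period side through the GKZ hypergeometric system. The periods of $\check\Omega_y$ are annihilated by the GKZ operators attached to the lattice points $\{v_i\}$ and the charge vectors $\{Q^a\}$. Near the large complex structure limit $y=0$, the Calabi-Yau condition $\sum_{i=0}^{m-1}Q_i^a=0$ forces the unique holomorphic solution to be constant, so that the mirror map is governed entirely by the $l$ logarithmic solutions $\log y_a+g_a(y)$. I would take the cycles $\Gamma_a$ to be exactly those whose period integrals realize these logarithmic solutions, giving $\log q_a(y)=\log y_a+g_a(y)$ with $g_a$ an explicit hypergeometric series (no constant term) whose coefficients I read off from the standard Frobenius-type series solution of the GKZ system. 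Since the leading term is $y_a$, the map is invertible near $0$, and I would record the inverse mirror map as $y_a(q)=q_a\exp(\tilde g_a(q))$ for an explicit power series $\tilde g_a$ in $q$.

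Next I would seek a closed-form expression for the open Gromov-Witten generating functions $1+\delta_i(q)$. The essential input is an open mirror theorem identifying the disk-counting invariants $n_{\beta_i+\alpha}$ with coefficients of a mirror map. For $X=K_Y$ this can be approached by expressing $1+\delta_i(q)$ through the $J$- or $I$-function of $X$ (for instance via the Seidel representation together with the toric mirror theorem), the upshot being that each $1+\delta_i(q)$ is a ratio or exponential of hypergeometric series built from the same GKZ data as above. Substituting these into the SYZ map and simplifying the product $\prod_i(1+\delta_i(q))^{Q_i^a}$ using the linear relations encoded by the $Q^a$, I would aim to verify that $y_a(q)=q_a\prod_i(1+\delta_i(q))^{Q_i^a}$ equals $q_a\exp(\tilde g_a(q))$ term by term, which yields $\phi=\psi^{-1}$ and hence the conjecture.

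The main obstacle is this second step: producing a tractable hypergeometric expression for $1+\delta_i(q)$. Open Gromov-Witten invariants resist direct computation, so the argument must rest on an open mirror theorem relating these disk invariants to the period series. Establishing and applying such a theorem for the noncompact target $K_Y$ — including the analysis of degenerations or localization, the role of Seidel-type operators, and a careful comparison of normalizations and flat coordinates on the two moduli spaces — is where the real work lies. Once the open invariants are expressed hypergeometrically, the final matching becomes a delicate but essentially formal manipulation of the GKZ series against the charge vectors $Q^a$.
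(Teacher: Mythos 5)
Your overall strategy---identifying the inverse mirror map with logarithmic solutions of the GKZ system on the period side, and expressing the disk-counting generating functions hypergeometrically via an open mirror theorem on the other---is essentially the route the paper takes, but it only takes the paper as far as the weaker Theorem~\ref{thm:main}, not the conjecture. (For the record, the open mirror theorem you invoke is supplied in the paper by the open/closed formula $n_{\beta_0+\alpha}=\langle[pt]\rangle^Z_{0,1,h+\alpha}$ for $Z=\bP(K_Y\oplus\mathcal{O}_Y)$ together with the toric mirror theorem for the compact semi-Fano $Z$; also note that for $X=K_Y$ only $\delta_0$ is nonzero, so the product $\prod_i(1+\delta_i(q))^{Q^a_i}$ collapses to the single factor $(1+\delta_0(q))^{Q^a_0}$.)

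The genuine gap sits in your first step, and it is exactly the gap that separates Conjecture~\ref{can_coords} from what is actually proved. You write that you would ``take the cycles $\Gamma_a$ to be exactly those whose period integrals realize these logarithmic solutions.'' Proposition~\ref{prop:GKZ_period} guarantees that every solution of the GKZ system---in particular each single-logarithm solution $\log y_a+g_a(y)$---is the period of $\check{\Omega}_y$ over \emph{some} cycle in $H_n(\check{X}_y;\bC)$, but it gives no control over whether that cycle can be chosen in $H_n(\check{X}_y;\bZ)$, let alone so that $\Gamma_1,\ldots,\Gamma_l$ form part of an integral basis. Whether the normalized single-logarithm solutions (with logarithmic coefficient exactly $1$ after dividing by the period over a monodromy-invariant cycle) are realized by integral cycles is a statement about the integral structure and monodromy of the local system of periods near $y=0$, related to Hosono's central charge formula, and nothing in your argument addresses it. As written, your construction produces only $\bC$-linear combinations of integral cycles, i.e.\ it reproves Theorem~\ref{thm:main}. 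To close the gap you would need, for example, to compute the monodromy around the large complex structure limit or to construct the cycles $\Gamma_a$ explicitly and verify that their periods have the form $\log y_a+\cdots$; this is precisely the open problem the paper defers to future work.
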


\begin{nb}
In \cite[Conjecture 1.1]{CLL12}, it was wrongly asserted that the integral cycles $\Gamma_1,\ldots,\Gamma_l$ gave an integral basis of $H_n(\check{X}_y;\bZ)$. The correct conjecture should be as stated above. We are grateful to the referees for pointing this out.
\end{nb}

\begin{nb}
We expect that Conjecture~\ref{can_coords} can be generalized to include the {\em bulk-deformed} genus 0 open Gromov-Witten invariants defined by Fukaya-Oh-Ohta-Ono in \cite{FOOO11}. Namely, we claim that any period integral $\int_\Gamma\check{\Omega}_y$ over an integral cycle $\Gamma\in H_n(\check{X}_y;\bZ)$ can be written in terms of certain generating functions of bulk-deformed genus 0 open Gromov-Witten invariants. Analogous results for $\bP^2$ was obtained by Gross in his work \cite{G10} on tropical geometry and mirror symmetry.
\end{nb}

Conjecture~\ref{can_coords} not only provides an enumerative meaning to the inverse mirror map, but also explains the integrality of the coefficients of its Taylor series expansions which has been observed earlier (see e.g. Zhou \cite{Z10}). This also shows that one can write down, using generating functions of disk open Gromov-Witten invariants, Gross-Siebert's slab functions which satisfy a normalization condition that is necessary to run the Gross-Siebert program (see \cite[Remark 5.1]{GS11} and \cite[Example 5.2]{GS11a}). In \cite[Section 5.3]{CLL12}, evidences of Conjecture~\ref{can_coords} were given for the toric Calabi-Yau surface $K_{\bP^1}$ and the toric Calabi-Yau 3-folds $\mathcal{O}_{\bP^1}(-1)\oplus\mathcal{O}_{\bP^1}(-1)$, $K_{\bP^2}$, $K_{\bP^1\times \bP^1}$. In the joint work \cite{LLW12} of the second author with Leung and Wu, Conjecture~\ref{can_coords} was verified for all toric Calabi-Yau surfaces.

In this paper, we prove the above conjecture for toric Calabi-Yau manifolds of the form $X = K_Y$ in a weaker sense that the $n$-cycles $\Gamma_a$, $a = 1,\ldots,l$, are allowed to have complex coefficients instead of being integral. The precise statement is as follows:
\begin{thm}\label{thm:main}
For a toric Calabi-Yau manifold of the form $K_Y$ where $Y$ is a compact toric Fano manifold,
there exist linearly independent cycles $\Gamma_1,\ldots,\Gamma_l\in H_n(\check{X}_y;\bC)$ such that
\begin{align*}
q_a=\exp\left(-\int_{\Gamma_a}\check{\Omega}_{\phi(q)}\right)\textrm{ for $a=1,\ldots,l$,}
\end{align*}
where $\phi(q)$ is the SYZ map in Definition~\ref{defn:SYZ_map} defined in terms of generating functions of the genus 0 open Gromov-Witten invariants $n_{\beta_i+\alpha}$.
\end{thm}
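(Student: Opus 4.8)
The plan is to show that the SYZ map $\phi$ and the classical mirror map $\psi$ are both governed by the $\hat{A}$-hypergeometric (GKZ) system attached to the polytope $\Delta$, and that they are mutually inverse because the hypergeometric data defining them agree. I would carry this out in three stages: first, reduce the period integrals $\int_{\Gamma_a}\check\Omega_y$ to solutions of the GKZ system and read the classical mirror map off the holomorphic and single-logarithmic solutions; second, put the open Gromov-Witten generating functions $1+\delta_i(q)$ entering $\phi$ into closed hypergeometric form; and third, match the two and invert.

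For the first stage, I would begin by integrating out the fibre variables $u,v$ in $\check\Omega_y$ by a residue computation, reducing $\int_{\Gamma_a}\check\Omega_y$ to an $(n-1)$-dimensional period of the Laurent polynomial $f=\sum_{i=0}^{m-1}A_iz^{w_i}$ over a cycle in $(\bC^\times)^{n-1}$ linking $Z_f$. Following Batyrev and Konishi-Minabe, such reduced periods are annihilated by the GKZ system associated to $\Delta$ and its interior lattice point $w_0=0$. Near the large complex structure limit $y=0$ this system has a unique holomorphic solution $\varpi_0(y)$ with $\varpi_0(0)=1$ and $l$ solutions $\varpi_a(y)=\varpi_0(y)\log y_a+\tilde\varpi_a(y)$ carrying a single logarithm; the classical mirror map is then $q_a(y)=y_a\exp(-\tilde\varpi_a(y)/\varpi_0(y))$ up to the standard sign conventions. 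What must be proved is that $\phi$ supplies exactly the inverse power series.

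For the second stage, I would invoke the explicit hypergeometric description of the open Gromov-Witten generating functions for $K_Y$, obtained via the open mirror theorem for toric Calabi-Yau manifolds (building on the closed toric mirror theorem of Givental and Lian-Liu-Yau together with the computations of Fukaya-Oh-Ohta-Ono). The Calabi-Yau condition forces every contributing class to be $\beta_i+\alpha$ with $\alpha$ effective, and for $K_Y$ the resulting series $1+\delta_i(q)$ is the exponential of a ratio of the same hypergeometric series $\varpi_0,\tilde\varpi_a$ appearing above, pulled back along $y=\phi(q)$. Substituting into $y_a(q)=q_a\prod_{i=0}^{m-1}(1+\delta_i(q))^{Q^a_i}$, the Calabi-Yau relations $\sum_iQ^a_iv_i=0$, equivalently $\sum_iQ^a_i=0$ and $\sum_iQ^a_iw_i=0$, are precisely what make the product collapse to the single correction factor $\exp(\tilde\varpi_a/\varpi_0)$.

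Combining the two stages reduces Theorem~\ref{thm:main} to the hypergeometric identity $\prod_{i=0}^{m-1}(1+\delta_i(q))^{Q^a_i}=\exp\big(\tilde\varpi_a(\phi(q))/\varpi_0(\phi(q))\big)$, after which $y=\phi(q)$ and $q=q(y)$ are inverse by construction. I expect the main obstacle to be twofold. The combinatorial heart is this matching identity between the disk-counting series and the logarithmic period solutions; it is where the special geometry of $K_Y$ — the single compact toric divisor corresponding to $v_0$ and the Fano hypothesis on $Y$ — is used in an essential way. The second difficulty, and the reason the statement is weakened from integral to complex coefficients, is that the prescribed logarithmic solutions $\varpi_a$ must be realized as genuine period integrals: since the periods over $H_n(\check X_y;\bC)$ span the relevant GKZ solution space, one can always choose linearly independent $\Gamma_a\in H_n(\check X_y;\bC)$ with the correct leading logarithms, whereas pinning these solutions down on the integral lattice $H_n(\check X_y;\bZ)$ is obstructed and is left to Conjecture~\ref{can_coords}.
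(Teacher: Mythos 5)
Your overall architecture agrees with the paper's: reduce the periods $\int_\Gamma\check{\Omega}_y$ to Batyrev-style periods of $f_y$ on $(\bC^\times)^{n-1}$, show that these span the solution space of the GKZ system attached to $\Delta$, show that the (inverse) SYZ map solves the same system, and observe that this argument only produces cycles with complex coefficients. Your closing diagnosis of why the integral refinement is out of reach is also exactly the paper's.

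The gap is in your second stage. The ``explicit hypergeometric description of the open Gromov-Witten generating functions for $K_Y$'' that you invoke as an input is essentially the theorem being proven: there is no off-the-shelf open mirror theorem here, the closed mirror theorem of Givental and Lian-Liu-Yau does not apply directly to the non-compact $X=K_Y$, and the Fukaya-Oh-Ohta-Ono machinery by itself does not yield the invariants $n_{\beta_0+\alpha}$ in closed form. The paper's route to this step, which is its real content, is: (i) Chan's open-closed formula $n_{\beta_0+\alpha}=\langle[pt]\rangle^Z_{0,1,h+\alpha}$ transfers the disk counts to closed invariants of the \emph{compact} semi-Fano $\bP^1$-bundle $Z=\bP(K_Y\oplus\mathcal{O}_Y)$; (ii) these invariants sit in the $H^0(Z)$-valued $1/z^2$-coefficient of $J_Z$, which by the closed mirror theorem for $Z$ equals the corresponding coefficient of $I_Z$, giving $y_{l+1}(q)=q_{l+1}(1+\delta_0(q))$; (iii) since $v_0$ is the only non-vertex of the fan polytope, the entire toric mirror map of $Z$ is governed by the single series $\Xi_0(y)$, whence $1+\delta_0(q)=\exp\Xi_0(y(q))$ and the first $l$ components of the inverse mirror map of $Z$ coincide with the SYZ map of $X$. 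Your ``matching identity'' then follows because the mirror map components of $Z$ solve Givental's GKZ-type equations, which descend to the GKZ system of $X$; without (i)--(iii) that identity is precisely the open problem, not a computation. Two smaller points: for $K_Y$ only $\delta_0$ is nonzero (because $D_0$ is the only compact toric divisor) --- it is this, not the Calabi-Yau relations, that collapses your product $\prod_i(1+\delta_i(q))^{Q^a_i}$ to a single factor; and the paper's endgame is leaner than yours, since it never needs to identify the inverse SYZ map with the normalized single-logarithm Frobenius solutions (that refinement belongs to Conjecture~\ref{can_coords}), only to know that its components solve the GKZ system and that periods over $H_n(\check{X}_y;\bC)$ span the solution space.
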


Our proof, which mainly relies on the formula proved in \cite{C11} and the toric mirror theorem \cite{G98, LLY99} for compact semi-Fano toric manifolds, can be outlined as follows. First, by the main result of \cite{C11}, genus 0 open Gromov-Witten invariants of $X=K_Y$ involved in Conjecture \ref{can_coords} can be equated with certain genus 0 {\em closed} Gromov-Witten invariants of the $\bP^1$-bundle
\begin{align*}
Z:=\bP(K_Y\oplus \mathcal{O}_Y)
\end{align*}
over $Y$. We observe that the closed Gromov-Witten invariants needed here occur in a certain coefficient of the $J$-function $J_Z$ of $Z$. Since $Z$ is semi-Fano (i.e. the anticanonical bundle $K_Z^{-1}$ is numerically effective), the toric mirror theorem of Givental \cite{G98} and Lian-Liu-Yau \cite{LLY99} can be applied and it says that $J_Z$ is equal to the combinatorially and explicitly defined $I$-function $I_Z$, via a toric mirror map.

Here comes another key observation: the toric mirror map, which is defined as certain coefficients of the $I$-function, can be written entirely in terms of a single function which is precisely the reciprocal of the generating function $1+\delta_0(q)$ of genus 0 open Gromov-Witten invariants of $X$. It is known that components of the toric mirror map of $Z$ satisfy certain GKZ-type differential equations. Combining with the previous observations, it is then easy to deduce that inverse of the SYZ map for $X$ gives solutions to the GKZ hypergeometric system associated to $X$. Our main result Theorem~\ref{thm:main} then follows by noting that period integrals give a basis of solutions of the GKZ system.

To prove the stronger version, Conjecture~\ref{can_coords}, we need to show that, after a suitable normalization of $\check{\Omega}_y$, there exist integral cycles $\Gamma_1,\ldots,\Gamma_l\in H_n(\check{X}_y;\bZ)$ such that the period integrals $\int_{\Gamma_a}\check{\Omega}_y$ have logarithmic terms of the form $\log y_a + \cdots$. This is closely related to integral structures coming from the central charge formula \cite{H06}. We plan to address this problem in the future; see the last subsection for more discussions.

The rest of this paper is organized as follows. In Section~\ref{sec:opGW_J-function}, we recall the formula equating open and closed Gromov-Witten invariants in \cite{C11} and explicitly compute the generating functions $1+\delta_i(q)$ for $X=K_Y$ using $J$-functions and the toric mirror theorem. In Section~\ref{sec:mirror_SYZ_maps}, we compute the toric mirror map for $Z=\bP(K_Y\oplus \mathcal{O}_Y)$ in terms of the functions $1+\delta_i(q)$. In Section~\ref{sec:GKZ_periods}, we first deduce that components of the inverse of the SYZ map for $X$ are solutions to the GKZ hypergeometric system associated to $X$. Then we prove our main result Theorem~\ref{thm:main} by showing that the period integrals $\int_\Gamma \check{\Omega}_y$ give a basis of solutions of the GKZ systems attached to $X$. We end with some discussions about definitions of mirror maps and ways to enhance Theorem~\ref{thm:main} to Conjecture~\ref{can_coords} in Section~\ref{sec:discussions}.

\section*{Acknowledgment}
We are grateful to Mark Gross and Bernd Siebert for sharing their insights and ideas, and to Lev Borisov, Herb Clemens, Shinobu Hosono, Yukiko Konishi and Satoshi Minabe for enlightening discussions and valuable comments on GKZ systems and period integrals. We thank Conan Leung for encouragement and related collaborations. We would also like to express our deep gratitude to the referees for reading our manuscript very carefully and for many useful comments and suggestions which led to a significant improvement in the exposition of this paper.

Part of this work was done when K. C. was working as a project researcher at the Kavli Institute for the Physics and Mathematics of the Universe (Kavli IPMU), University of Tokyo and visiting IH\'ES. He would like to thank both institutes for hospitality and providing an excellent research environment. S.-C. L. also expresses his deep gratitude to Kavli IPMU for hospitality and providing a very nice research and living environment. The work of K. C. described in this paper was substantially supported by a grant from the Research Grants Council of the Hong Kong Special Administrative Region, China (Project No. CUHK404412). The work of H.-H. T. was supported in part by NSF grant DMS-1047777.

\section{Computing open GW invariants via $J$-functions}\label{sec:opGW_J-function}
In this section, we will establish a formula relating the generating functions $1+\delta_i(q)$ of disk open Gromov-Witten invariants of $X=K_Y$ and the toric mirror map for the $\bP^1$-bundle $Z=\bP(K_Y\oplus\mathcal{O}_Y)$ over $Y$. The main result is Theorem \ref{prop_J-function}.

\subsection{Open Gromov-Witten invariants of $K_Y$}\label{subsec:opGW}
In this subsection we recall the formula computing open Gromov-Witten invariants of $K_Y$ in terms of closed Gromov-Witten invariants, proved in \cite{C11}. To begin with, let us briefly recall the definition of the genus 0 open Gromov-Witten invariants $n_{\beta_i+\alpha}$ following Fukaya-Oh-Ohta-Ono \cite{FOOO10}.

Let $X$ be a toric manifold of complex dimension $n$, equipped with a toric K\"ahler structure $\omega$. Let
$$L\subset X$$
be a Lagrangian torus fiber of the moment map associated to the Hamiltonian $T^n$-action on $(X,\omega)$. Let $\beta\in\pi_2(X,L)$ be a relative homotopy class with Maslov index $\mu(\beta)=2$. Consider the moduli space $\mathcal{M}_1(L,\beta)$ of holomorphic disks in $X$ with boundaries lying in $L$ and one boundary marked point representing the class $\beta$. A compactification of $\mathcal{M}_1(L,\beta)$, $$\mathcal{M}_1(L,\beta)\subset\overline{\mathcal{M}}_1(L,\beta)$$ is given by the moduli space $\overline{\mathcal{M}}_1(L,\beta)$ of stable maps from genus 0 bordered Riemann surfaces $(\Sigma,\partial\Sigma)$ to $(X,L)$ with one boundary marked point and class $\beta$.

It is shown in \cite{FOOO09, FOOO10} that $\overline{\mathcal{M}}_1(L,\beta)$ is a Kuranishi space of virtual (real) dimension $n$. Let $[\overline{\mathcal{M}}_1(L,\beta)]^\textrm{vir}$ be its virtual fundamental cycle. This is an $n$-cycle instead of a chain because $\beta$ is of minimal Maslox index and consequently $\partial\overline{\mathcal{M}}_1(L,\beta)=\emptyset$. The pushforward of this cycle by the evaluation map $ev:\overline{\mathcal{M}}_1(L,\beta)\to L$ at the boundary marked point defines a genus 0 {\em open Gromov-Witten invariant}
$$n_\beta:=ev_*([\overline{\mathcal{M}}_1(L,\beta)]^\textrm{vir})\in H_n(L;\bQ)\cong\bQ.$$
In \cite{FOOO10}, it is shown that the number $n_\beta$ is independent of the choice of perturbations by multi-sections and hence is indeed an invariant of $(X, L, \omega)$.
\begin{nb}
Since the moment map of $X$ does not contain singular fibers, there is no wall-crossing; in other words, the invariants $n_\beta$ remain unchanged when we move the Lagrangian torus fiber $L$.
\end{nb}

Let $v_0,v_1,\ldots,v_{m-1}$ be the primitive generators of the fan $\Sigma$ defining $X$ and let $D_0,D_1,\ldots,D_{m-1}\subset X$ be the associated toric prime divisors (i.e. irreducible torus-invariant hypersurfaces) respectively. Holomorphic disks in $X$ with boundaries in $L$ give an additive basis
$$\beta_0,\beta_1,\ldots,\beta_{m-1}$$
of $\pi_2(X,L)\cong\bZ^m$ such that $\beta_i\cdot D_j=\delta_{ij}$. In \cite{CO03}, Cho and Oh proved that, for $i=0,1,\ldots,m-1$, there exists a unique (up to automorphisms of the domain) holomorphic disk $\varphi:(D^2,\partial D^2)\to (X,L)$ passing through a generic point in $L$ and representing the class $\beta_i$. We call such holomorphic disks the {\em basic disks} and hence their classes $\beta_0,\beta_1,\ldots,\beta_{m-1}$ the {\em basic disk classes}.

Suppose that $X$ is semi-Fano, i.e. the anticanonical divisor $-K_X$ is nef. Then it follows from the results of Cho-Oh \cite{CO03} and Fukaya-Oh-Ohta-Ono \cite{FOOO10} that $n_\beta\neq0$ only when $\beta=\beta_i+\alpha$ for some $i$ and some $\alpha\in H_2^\textrm{eff}(X;\bZ)$ with $c_1(\alpha)=0$ (see e.g. \cite[Lemma 5.1]{C11}).

In what follows we will focus on the case when $X=X_\Sigma$ is the total space of the canonical line bundle $K_Y$ over a compact toric Fano manifold $Y$. Such an $X$ is a (noncompact) Calabi-Yau manifold, i.e. $K_X\cong\mathcal{O}_X$. By our convention, the primitive generators of the fan $\Sigma$ are chosen to be of the form $v_i=(w_i,1)$ for $i=0,1,\ldots,m-1$ and so that $w_0=0$. Without loss of generality, we also require that $w_1,\ldots,w_{m-1}\in N':=\bZ^{n-1}$ form the primitive generators of the fan in $N'_\bR=\bR^{n-1}$ defining $Y$. The toric prime divisor $D_0\subset X$ is then nothing but the zero section $Y\hookrightarrow K_Y$.

Since $D_0$ is the only compact toric prime divisor in $X$, the invariant $n_\beta$ is non-zero only when either $\beta=\beta_i$ for some $1\leq i\leq m-1$ or $\beta=\beta_0+\alpha$ for some $\alpha\in H_2(X;\bZ) = H_2(Y;\bZ)$. By the results of Cho-Oh \cite{CO03} mentioned above, we already know that $n_{\beta_i}=1$ for any $i$. The other invariants $n_{\beta_0+\alpha}$ are computed by the following formula:

\begin{thm}[Theorem 1.1 in \cite{C11}]\label{open_closed}
Consider the $\bP^1$-bundle $Z=\bP(K_Y\oplus\mathcal{O}_Y)\to Y$ over $Y$. Let $h\in H_2(Z;\bZ)$ be the fiber class and $[pt]\in H^{2n}(Z;\bC)$ the Poincar\'e dual of a point. Denote by $\langle[pt]\rangle^Z_{0,1,h+\alpha}$ the 1-point genus zero closed Gromov-Witten invariant of $Z$ with insertion the point class $[pt]$. Then we have the equality
\begin{equation*}
n_{\beta_0+\alpha}=\langle[pt]\rangle^Z_{0,1,h+\alpha}.
\end{equation*}
between open and closed Gromov-Witten invariants.
\end{thm}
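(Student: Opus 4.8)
The plan is to prove the equality geometrically, by identifying the moduli space of Maslov index two disks bounding $L$ in the class $\beta_0+\alpha$ with a suitably constrained moduli space of genus zero stable maps to $Z$ in the class $h+\alpha$, exploiting the $\bP^1$-bundle structure of $Z$ to cap off disks into spheres. Write $D_\infty\subset Z$ for the section at infinity, so that $Z\setminus D_\infty=K_Y=X$ and the zero section $D_0$ becomes a second disjoint section. At the level of a $\bP^1$-fiber the fiber class decomposes as $h=\beta_0+\beta_\infty$, where $\beta_\infty$ is the basic disk meeting $D_\infty$; since $\alpha\in H_2(Y)=H_2(D_0)$ satisfies $\alpha\cdot D_\infty=0$ while $h\cdot D_\infty=1$, every stable sphere in class $h+\alpha$ meets $D_\infty$ in a single point. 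The virtual dimensions already match: $\beta_0+\alpha$ has Maslov index two, so $\overline{\mathcal{M}}_1(L,\beta_0+\alpha)$ has real dimension $n$, while $c_1(Z)\cdot(h+\alpha)=2+0=2$ forces $\overline{\mathcal{M}}_{0,1}(Z,h+\alpha)$ to have complex dimension $n$. After taking the top evaluation degree to $L$ on the open side and imposing the complex codimension $n$ point incidence on the closed side, both sides are numbers.

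First I would represent the closed invariant by placing the point class $[pt]$ at a generic point $p\in D_\infty$. A stable map in class $h+\alpha$ through $p$ then meets $D_\infty$ only at $p$ and with multiplicity one, so its restriction to $Z\setminus D_\infty=X$ is a genus zero curve through $L$ whose behaviour near $p$ is that of the standard fiber disk $\beta_\infty$. Truncating the $\bP^1$-fiber at the Lagrangian $L$ (the equatorial $S^1$-orbit in the fiber direction) and discarding the $\beta_\infty$-part leaves a holomorphic disk in $(X,L)$ of class $(h+\alpha)-\beta_\infty=\beta_0+\alpha$. This capping/decapping construction should give a homeomorphism between $\overline{\mathcal{M}}_1(L,\beta_0+\alpha)$ and the locus of $\overline{\mathcal{M}}_{0,1}(Z,h+\alpha)$ cut out by the incidence at $p$. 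Matching the two invariants then also requires reconciling the constraint structures: the interior point incidence at $p\in D_\infty$, which fixes the fiber and hence the boundary $S^1$ of the disk, against the boundary evaluation $ev:\overline{\mathcal{M}}_1(L,\beta_0+\alpha)\to L$ on the open side.

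The main obstacle, and the technical heart of the argument, is to upgrade this correspondence of underlying moduli spaces to an equality of virtual fundamental cycles, i.e. to show that the Kuranishi structure defined by the $\bar\partial$-operator with Lagrangian boundary conditions on the open side agrees with the ordinary deformation-obstruction theory of stable maps on the closed side. I would compare the two linearized operators directly through the capping construction, which glues the disk domain to the complementary disk in $\bP^1$, and check that the spaces of infinitesimal deformations and obstructions are canonically isomorphic; the equality of virtual dimensions computed above is the numerical shadow of this isomorphism. Matching orientations and ensuring the obstruction bundles are identified compatibly with signs is the delicate step that makes the comparison substantive rather than formal.

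Granting the equality of virtual cycles and the matching of constraints, the numerical identity follows at once: the left-hand side is the degree of $ev_*[\overline{\mathcal{M}}_1(L,\beta_0+\alpha)]^{\textrm{vir}}$ in $H_n(L;\bQ)\cong\bQ$, the right-hand side is $\int_{[\overline{\mathcal{M}}_{0,1}(Z,h+\alpha)]^{\textrm{vir}}}ev^*[pt]$, and the correspondence shows that these count the same objects with the same weights. As a consistency check, and as an alternative route should the Kuranishi comparison prove too delicate, I would recompute both sides by $T^n$-equivariant localization, since $X$ and $Z$ are toric: the closed side by virtual localization on $Z$, the open side by the Cho-Oh description of the basic disks together with the Fukaya-Oh-Ohta-Ono multi-section perturbations, matching the contributions of corresponding fixed loci.
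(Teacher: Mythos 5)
First, a point of logistics: the paper you are reading does not prove this statement at all --- it is quoted verbatim as Theorem 1.1 of \cite{C11} and used as a black box. So the only meaningful comparison is with the proof in \cite{C11} itself, and your overall architecture does match it: cap the basic disk $\beta_0$ with the complementary disk $\beta_\infty$ to realize the fiber class $h$, identify the open and closed moduli spaces, and compare Kuranishi structures. That is the right skeleton.

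However, two concrete steps in your execution would fail as written. First, placing the point constraint at a \emph{generic} point $p\in D_\infty$ is incompatible with your truncation construction: the fiber of $Z\to Y$ through a generic point of $D_\infty$ projects to a point of $Y$ not lying on the image of $L$, so that fiber is disjoint from $L$ and there is nothing to ``truncate at the equatorial $S^1$-orbit.'' Since the open invariant is by definition the degree of the boundary evaluation $ev:\overline{\mathcal{M}}_1(L,\beta_0+\alpha)\to L$, the constraint must be placed at a point of $L$ itself (as in \cite{C11}); only then does the unique fiber through that point decompose as $\beta_0+\beta_\infty$ along $L$. Second, you picture a stable map in class $h+\alpha$ as an irreducible curve meeting $D_\infty$ transversally whose restriction to $X$ passes near $L$. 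In fact, for $\alpha\neq 0$ every stable map in class $h+\alpha$ is reducible: since the normal bundle of the zero section $D_0$ is $K_Y$ and $Y$ is Fano, $D_0\cdot\alpha=K_Y\cdot\alpha<0$, which forces all $\alpha$-components into $D_0$, so the curve is a fiber union a rational curve in the zero section attached at the single point where the fiber meets $D_0$. The same structural statement holds on the open side (a basic $\beta_0$-disk with sphere bubbles in $D_0$ attached at its unique intersection with $D_0$), and it is precisely this matching decomposition that makes the comparison of obstruction theories possible --- both constrained moduli spaces fiber over the same space of rational curves in $Y$ through a point, carrying a nontrivial obstruction bundle built from $H^1$ of the pullback of $K_Y$ (this is local Gromov--Witten theory, and the excess dimension is why the count is not a naive transversal one). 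Your final paragraph correctly identifies the Kuranishi comparison as the heart of the matter, but only promises it; that identification of deformation--obstruction data, with orientations, \emph{is} the theorem, so as it stands the proposal is a plausible strategy containing one genuine geometric misstep rather than a proof.
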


\subsection{Computation via $J$-function}
By Theorem \ref{open_closed}, in order to compute the genus 0 open Gromov-Witten invariants appearing in Conjecture \ref{can_coords} for $X=K_Y$, it suffices to compute the genus 0 closed Gromov-Witten invariants:
\begin{equation}\label{closed_GW}
\langle[pt]\rangle^Z_{0,1,h+\alpha}.
\end{equation}
In this subsection, we explain how to do this using the (small) $J$-function and the toric mirror theorem \cite{G98, LLY99}.

The $\bP^1$-bundle $Z$ is a toric manifold defined by the fan $\bar{\Sigma}$ generated by $\Sigma(1)=\{v_0,v_1,\ldots,v_{m-1}\}$ together with the additional ray spanned by $v_m := -v_0$. We have $H_2(Z;\bZ) = \bZ \oplus H_2(Y;\bZ)$ and $H^2(Z;\bZ) = \bZ \oplus H^2(Y;\bZ)$. Choose a nef basis $\{p_1,\ldots,p_l,p_{l+1}\}$ (recall that $l=m-n$) of $H^2(Z;\bZ)$ so that $\{p_1,\ldots,p_l\}$ gives a positive basis of $H^2(Y;\bZ)$ with dual basis $\{\gamma_1,\ldots,\gamma_l\}\subset H_2(Y;\bZ)$ and such that $\{\gamma_1,\ldots,\gamma_l,\gamma_{l+1}=h\}$ gives the dual basis in $H_2(Z;\bZ)$. As usual, we denote by $D_0,\ldots,D_{m-1},D_m\subset X$ the toric prime divisors associated to the generators $v_0,\ldots,v_{m-1},v_m$ respectively.

By definition, the {\em small J-function} $J_Z$ of $Z$ is given by \cite{G98}
\begin{equation}\label{J-function}
J_Z(q,z)=e^{\tau/z}\Bigg(1+\sum_\alpha\sum_{d\in H_2^\textrm{eff}(Z;\bZ)\setminus\{0\}}q^d\Big\langle1,\frac{\phi_\alpha}{z-\psi}\Big\rangle^Z_{0,2,d}\phi^\alpha\Bigg),
\end{equation}
where $\tau = \sum_{a=1}^{l+1} p_a \log q_a \in H^2(Z)$, $q^d = \prod_{a=1}^{l+1} q_a^{p_a\cdot d}$ (here $q_a$'s are regarded as formal variables), $\{\phi_\alpha\}\subset H^*(Z)$ is a homogeneous additive basis, and $\{\phi^\alpha\}$ is its dual basis (with respect to the Poincar\'e pairing). Expanding (\ref{J-function}) into a power series in $1/z$ yields
\begin{align*}
J_Z(q,z) & = e^{\tau/z}\Bigg(1+\sum_\alpha\sum_{d\in H_2^\textrm{eff}(Z;\bZ)\setminus\{0\}}q^d\frac{1}{z}\sum_{k\geq0}\langle1,\phi_\alpha\psi^k\rangle^Z_{0,2,d}\frac{\phi^\alpha}{z^k}\Bigg)\\
& = e^{\tau/z}\Bigg(1+\sum_\alpha\sum_{d\in H_2^\textrm{eff}(Z;\bZ)\setminus\{0\}}q^d\frac{1}{z}\sum_{k\geq1}\langle\phi_\alpha\psi^{k-1}\rangle^Z_{0,1,d}\frac{\phi^\alpha}{z^k}\Bigg),
\end{align*}
where in the second equality we use the string equation.

We observe that the closed Gromov-Witten invariants \eqref{closed_GW} we need occur in the coefficient of the $1/z^2$-term of $J_Z$ that takes values in $H^0(Z)$. Indeed, since \eqref{closed_GW} has no descendant insertions, we look at the terms in the above expansion with $k=1$. Furthermore, to get \eqref{closed_GW} we need $\phi_\alpha=[pt]\in H^{2n}(Z)$ and thus $\phi^\alpha=1\in H^0(Z)$.

In order to extract (\ref{closed_GW}) from $J_Z$, we use the explicit formula for $J_Z$ given by the toric mirror theorem. Recall that the {\em $I$-function} $I_Z$ of $Z$ is given by
\begin{equation}
I_Z(y,z)=e^{t/z}\sum_{d\in H_2^\textrm{eff}(Z;\bZ)}y^d\prod_{i=0}^m\frac{\prod_{k=-\infty}^0(D_i+kz)}{\prod_{k=-\infty}^{D_i\cdot d}(D_i+kz)},
\end{equation}
where $t = \sum_{a=1}^{l+1} p_a \log y_a \in H^2(Z)$, $y^d = \prod_{a=1}^{l+1} y_a^{p_a\cdot d}$ (again we regard the $y_a$'s as formal variables) and we identify $D_i$ with its cohomology class in $H^2(Z)$. Here, the product should be expanded into a $1/z$-series by writing $D_i+mz=mz(1+D_i/mz)$. Note that both $J_Z$ and $I_Z$ are $H^*(Z)$-valued formal functions. (In fact, by \cite[Lemma 4.2]{I09}, the $I$-function and hence, via the mirror theorem stated below, the $J$-function are convergent power series near $y=0$ and $q=0$ respectively.)

The $I$-function has the asymptotics
$$I_Z(y,z) = 1 + \frac{\Upsilon(y)}{z} + \textrm{higher order terms in $z^{-1}$},$$
where $\Upsilon(y)$ is a (multi-valued) function with values in $H^2(Z)$. We define the {\em toric mirror map} for $Z$ to be the map $y\mapsto q(y) := \exp \Upsilon(y)$.
The toric mirror theorem applied to the semi-Fano toric manifold $Z$ then says the following
\begin{thm}[Toric mirror theorem \cite{G98,LLY99}]\label{thm:toric_mirror_thm}
The $I$-function and the $J$-function coincides via the toric mirror map $y\mapsto q(y)$, i.e.
\begin{align*}
J(q(y),z) = I(y,z).
\end{align*}
\end{thm}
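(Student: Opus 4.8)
The plan is to recast Theorem~\ref{thm:toric_mirror_thm} in Givental's symplectic formalism and reduce it to the assertion that the hypergeometric function $I_Z$ lies on the genus~$0$ Lagrangian cone of $Z$. Let $\mathcal{H}=H^*(Z;\bC)((z^{-1}))$ be Givental's loop space with symplectic pairing $\Omega(f,g)=\mathrm{Res}_{z=0}\,(f(-z),g(z))\,dz$ assembled from the Poincar\'e pairing, carrying the standard polarization $\mathcal{H}=\mathcal{H}_+\oplus\mathcal{H}_-$, and let $\mathcal{L}_Z\subset\mathcal{H}$ be the overruled Lagrangian cone recording all genus~$0$ descendant invariants of $Z$. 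I will use two structural inputs. First, Givental's reconstruction of the cone from the $J$-function: the large $J$-function $z\,J_Z(q,z)=z\mathbf{1}+\tau+O(z^{-1})$ parametrizes the intersection of $\mathcal{L}_Z$ with a distinguished affine slice. Second, the uniqueness (``ruling'') property of the cone: any $\mathcal{H}$-valued family $F(y,z)$ lying on $\mathcal{L}_Z$ whose asymptotic expansion has the shape $z\mathbf{1}+\tau(y)+O(z^{-1})$ with $\tau(y)\in H^*(Z)$ must coincide with the large $J$-function evaluated at the point $\tau(y)$. Granting these, it suffices to prove that $z\,I_Z(y,z)$ lies on $\mathcal{L}_Z$ and then to read off its leading asymptotics.

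The core of the proof is to show that $I_Z$ lies on $\mathcal{L}_Z$; this is the genuine content of the mirror theorem and the step I expect to be the main obstacle. I would carry it out by the Lian-Liu-Yau mirror-principle method: introduce the linear sigma model $N_d$, a compactification of the space of degree-$d$ maps $\bP^1\to Z$ equipped with the residual $\bC^\times$-action, and perform equivariant localization on $N_d$ together with the graph space. One then identifies the hypergeometric factors $\prod_{i}\prod_{k}(D_i+kz)^{\pm 1}$ of $I_Z$ with the equivariant Euler classes of the $\bC^\times$-fixed loci, verifies that the resulting coefficients form ``linked Euler data'' satisfying the expected recursion, and applies the functorial localization formula to transport this data to the moduli of stable maps $\overline{M}_{0,1}(Z,d)$. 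The conclusion is precisely that $z\,I_Z(y,z)$ lies on $\mathcal{L}_Z$. Equivalently, one may run Givental's direct fixed-point analysis on $\overline{M}_{0,1}(Z,d)$ and check the two properties---the recursion relations and the absence of positive powers of $z$---that characterize families on the cone.

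With $I_Z$ on the cone, the remaining step uses the semi-Fano hypothesis to identify the point of the cone with the toric mirror map, and here the combinatorics of the $I$-function is decisive. Since $c_1(Z)\cdot d\ge 0$ for every effective class $d$, a direct power count shows that the summand of $I_Z$ indexed by $d\neq 0$ has top $z$-exponent $-c_1(Z)\cdot d-\#\{i:D_i\cdot d<0\}$, which is $\le -1$: if some $D_i\cdot d<0$ this is immediate, while if all $D_i\cdot d\ge 0$ then $\sum_i D_i\cdot d=c_1(Z)\cdot d=0$ forces $d=0$. Hence there is no $z^0$-correction and $I_Z(y,z)=\mathbf{1}+\Upsilon(y)/z+O(z^{-2})$ with $\Upsilon(y)\in H^2(Z)$; convergence near $y=0$ is supplied by \cite[Lemma 4.2]{I09}. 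Therefore $z\,I_Z(y,z)$ has leading form $z\mathbf{1}+\Upsilon(y)+O(z^{-1})$, and the uniqueness property forces $z\,I_Z(y,z)=z\,J_Z(q(y),z)$, i.e.\ $I_Z(y,z)=J_Z(q(y),z)$ with $q(y)=\exp\Upsilon(y)$, which is exactly the toric mirror map. The semi-Fano condition is what guarantees the clean form of the statement: it both kills the $z^0$-correction, so that no Birkhoff factorization of $I_Z$ is needed, and confines the mirror map to the degree-two part, turning the uniqueness lemma into a direct identification.

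As indicated, the difficulty is concentrated in the second step---placing $I_Z$ on $\mathcal{L}_Z$. The localization bookkeeping on the linear sigma model, namely matching every hypergeometric weight against the equivariant Euler class of the corresponding fixed locus and verifying the linking recursion for the Euler data, is the technically demanding heart of the argument; by comparison the cone formalism of the first step and the semi-Fano asymptotic analysis of the third are essentially formal once this polynomiality-and-recursion input is secured.
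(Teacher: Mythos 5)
This statement is not proved in the paper at all: Theorem~\ref{thm:toric_mirror_thm} is imported as a black box from Givental \cite{G98} and Lian--Liu--Yau \cite{LLY99}, so there is no in-paper argument to compare yours against. Judged on its own terms, your proposal is an accurate roadmap of how the cited theorem is actually established: the reduction to showing that $z\,I_Z$ lies on the genus-$0$ Lagrangian cone $\mathcal{L}_Z$, the identification via the ruling/uniqueness property of the cone, and the semi-Fano asymptotic count are all correct. In particular your degree count, that the summand indexed by $d\neq 0$ has top $z$-exponent $-c_1(Z)\cdot d-\#\{i:D_i\cdot d<0\}\le -1$, is right, and it is worth making explicit the small extra observation that no effective $d$ can have $c_1(d)=1$ with all $D_i\cdot d\ge 0$ (that would force some $v_j=0$ via the relation $\sum_i (D_i\cdot d)v_i=0$), which is what guarantees $\Upsilon(y)\in H^2(Z)$ rather than $H^0\oplus H^2$, so that no Birkhoff factorization or string-equation shift is needed.

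The one genuine shortfall is the one you flag yourself: the step ``show $z\,I_Z$ lies on $\mathcal{L}_Z$'' is described but not executed, and that step \emph{is} the toric mirror theorem --- the verification of linked Euler data and the linking recursion in \cite{LLY99}, or equivalently Givental's recursion-plus-polynomiality characterization of points of the cone via $\bC^\times$-fixed-point localization on the graph space, is the entire technical content of those papers. So your text is a correct proof schema with the load-bearing lemma deferred, which is defensible here precisely because the paper itself defers it to the same references; but it should be presented as a citation with an explanatory sketch rather than as a self-contained proof.
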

In view of this, in order to extract the invariants \eqref{closed_GW}, we should look for the coefficient of the $1/z^2$-term of $I_Z$ that takes values in $H^0(Z)$.

Consider the expansion of the factor
\begin{equation}\label{factor_I-function}
\frac{\prod_{k=-\infty}^0(D_i+kz)}{\prod_{k=-\infty}^{D_i\cdot d}(D_i+kz)}
\end{equation}
into a $1/z$-series, achieved by writing $D_i+kz=kz(1+D_i/kz)$. Since we want terms with values in $H^0(Z)$, we cannot have $D_i$ involved. There are three possibilities:
\begin{enumerate}
\item[(1)] $D_i\cdot d>0$: the only term in the expansion of \eqref{factor_I-function} that does not contain $D_i$ is the leading term $\frac{1}{(D_i\cdot d)!}z^{-D_i\cdot d}.$
\item[(2)] $D_i\cdot d=0$: the quotient \eqref{factor_I-function} is just 1 in this case.
\item[(3)] $D_i\cdot d<0$: in this case the quotient \eqref{factor_I-function} is proportional to $D_i$, because of the factor corresponding to $k=0$.
\end{enumerate}
Thus for the terms we need, only cases (1) and (2) can occur.

Note that $-K_Z=\sum_{i=0}^m D_i$. Therefore, in the sum
$$\sum_{d\in H_2^\textrm{eff}(Z;\bZ)\setminus\{0\}}y^d\prod_{i=0}^m \frac{\prod_{k=-\infty}^0(D_i+kz)}{\prod_{k=-\infty}^{D_i\cdot d}(D_i+kz)},$$
the part of the $1/z^2$-term that takes values in $H^0(Z)$ is
\begin{equation}\label{I1}
\sum_d\frac{y^d}{\prod_i(D_i\cdot d)!},
\end{equation}
where the sum is over all $d\in H_2^\textrm{eff}(Z;\bZ)\setminus\{0\}$ such that $c_1(d)=-K_Z\cdot d=2$ and $D_i\cdot d\geq0$ for all $i=0,1,\ldots,m-1,m$. Note that the term in $I_Z$ with $d=0$ and the factor $e^{t/z}$ do not contribute. So the coefficient of the $1/z^2$-term of $I_Z$ that takes values in $H^0(Z)$ is given exactly by \eqref{I1}.

For $d\in H_2^\textrm{eff}(Z;\bZ)\setminus\{0\}$ such that $c_1(d)=-K_Z\cdot d=2$ and $D_i\cdot d\geq0$ for all $i$, there are two possibilities:
\begin{enumerate}
\item[(i)] either $D_j\cdot d=2$ for some $j$ and $D_i\cdot d=0$ for all $i\neq j$, or
\item[(ii)] $D_{j_1}\cdot d=D_{j_2}\cdot d=1$ and $D_i\cdot d=0$ for all $i\not\in\{j_1, j_2\}$.
\end{enumerate}
Case (i) is impossible as $Z$ is compact, so we are left with case (ii). Let $d\in H_2^\textrm{eff}(Z;\bZ)$ be as in case (ii). Note that $H_2^\textrm{eff}(Z;\bZ) = \bZ_{\geq0}\cdot h \oplus H_2^\textrm{eff}(Y;\bZ)$, and $c_1(h)=2$ and $c_1(\alpha)=0$ for any $\alpha\in H_2(Y;\bZ)\subset H_2(Z;\bZ)$. Hence, $d$ must be of the form $d = h + \alpha$ for some $\alpha \in H_2^\textrm{eff}(Y;\bZ)$. Then we have $D_m\cdot d=1$ since $D_m\cdot \alpha=0$. So $d\in H_2(Z;\bZ)$ must be corresponding to an integral relation of the form $v_j+v_m=0$. The only such class is the fiber class $h=\gamma_{l+1}$, and therefore we conclude that the sum \eqref{I1} is simply given by $y_{l+1}$.

\begin{thm}\label{prop_J-function}
For the $\bP^1$-bundle $Z=\bP(K_Y\oplus\mathcal{O}_Y)$ over a compact toric Fano manifold $Y$, we have the following formula:
\begin{equation}
y_{l+1}(q)=q_{l+1}(1+\delta_0(q_1,\ldots,q_l)),
\end{equation}
where $y=y(q)$ is the inverse of the toric mirror map for $Z$ and
\begin{align*}
1+\delta_0(q_1,\ldots,q_l)=\sum_{\alpha\in H_2^\textrm{eff}(Y;\bZ)}n_{\beta_0+\alpha}q^\alpha
\end{align*}
is the generating function for open Gromov-Witten invariants of $X=K_Y$; here $q^\alpha=\prod_{a=1}^l q_a^{p_a\cdot \alpha}$.
\end{thm}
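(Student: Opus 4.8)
The plan is to extract the same piece of geometric data---the coefficient of the $1/z^2$-term taking values in $H^0(Z)$---from both the $J$-function and the $I$-function of $Z$, and then to equate the two expressions via the toric mirror theorem (Theorem~\ref{thm:toric_mirror_thm}). The $I$-function side has already been computed in \eqref{I1}: that coefficient equals $y_{l+1}$. It therefore remains to carry out the parallel computation for $J_Z$, to rewrite the answer in terms of open Gromov-Witten invariants using Theorem~\ref{open_closed}, and finally to invert the mirror map.

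First I would isolate the $H^0(Z)$-valued part of the $1/z^2$-coefficient of $J_Z$ from the expansion recorded just after \eqref{J-function}. Writing $J_Z = e^{\tau/z}\cdot P$ with $P = 1 + \sum_\alpha\sum_{d\neq 0}q^d\,z^{-1}\sum_{k\geq 1}\langle\phi_\alpha\psi^{k-1}\rangle^Z_{0,1,d}\,z^{-k}\phi^\alpha$, two features simplify matters: the prefactor $e^{\tau/z}$ contributes only its constant term $1$ to $H^0(Z)$, since $\tau\in H^2(Z)$ forces $\tau^j\in H^{2j}(Z)$ for $j\geq 1$; and the bracket $P$ has \emph{no} $1/z$-term (the string equation kills the $k=0$ contribution), so there is no cross term pairing $\tau/z$ with a $1/z$-piece of $P$. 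Hence the only contribution to the $H^0(Z)$-valued $1/z^2$-coefficient comes from the $k=1$ term of $P$ with $\phi^\alpha = 1$, i.e. $\phi_\alpha = [pt]$, giving $\sum_{d}q^d\langle[pt]\rangle^Z_{0,1,d}$, the sum running over $d\in H_2^{\textrm{eff}}(Z;\bZ)\setminus\{0\}$.

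Next I would restrict this sum to the classes that actually contribute. Exactly as in the $I$-function analysis preceding \eqref{I1}, the invariant forces $c_1(d)=2$ and hence $d=h+\alpha$ for some $\alpha\in H_2^{\textrm{eff}}(Y;\bZ)$. Using the pairings $p_a\cdot h=0$ for $a\leq l$, $p_{l+1}\cdot h=1$, and $p_{l+1}\cdot\alpha=0$, one computes $q^{h+\alpha}=q_{l+1}\,q^\alpha$ with $q^\alpha=\prod_{a=1}^l q_a^{p_a\cdot\alpha}$. Invoking Theorem~\ref{open_closed} to replace $\langle[pt]\rangle^Z_{0,1,h+\alpha}$ by $n_{\beta_0+\alpha}$, the coefficient becomes $q_{l+1}\sum_\alpha n_{\beta_0+\alpha}q^\alpha = q_{l+1}\bigl(1+\delta_0(q_1,\ldots,q_l)\bigr)$.

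Finally, applying Theorem~\ref{thm:toric_mirror_thm} and comparing with \eqref{I1}, the identity $q_{l+1}\bigl(1+\delta_0(q_1,\ldots,q_l)\bigr) = y_{l+1}$ holds after the substitution $q=q(y)$; inverting the toric mirror map (a local isomorphism near the origin) then yields the stated formula $y_{l+1}(q)=q_{l+1}\bigl(1+\delta_0(q_1,\ldots,q_l)\bigr)$. The argument is essentially bookkeeping once the two imported ingredients are in place, so the point requiring the most care is the coefficient extraction itself: one must verify that the exponential prefactor and the missing $1/z$-term in $P$ leave no stray $H^0(Z)$-contributions, and that the pairing computation $q^{h+\alpha}=q_{l+1}q^\alpha$ (which is precisely what makes $\delta_0$ depend only on $q_1,\ldots,q_l$ and not on $q_{l+1}$) is correct. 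The identification of $d=h+\alpha$ as the complete set of contributing classes is the one genuinely geometric input, but it has already been dispatched on the $I$-function side.
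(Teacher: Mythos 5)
Your proposal is correct and follows essentially the same route as the paper: extract the $H^0(Z)$-valued $1/z^2$-coefficient from both $I_Z$ (which the preceding computation shows equals $y_{l+1}$) and $J_Z$ (which the string-equation expansion shows equals $\sum_d q^d\langle[pt]\rangle^Z_{0,1,d}$), equate them via Theorem~\ref{thm:toric_mirror_thm}, reduce to $d=h+\alpha$ by the dimension constraint $c_1(d)=2$, and convert to open invariants via Theorem~\ref{open_closed}. The extra care you take with the $e^{\tau/z}$ prefactor and the absent $1/z$-term of the bracket is a valid (and slightly more explicit) justification of a step the paper treats as an observation.
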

\begin{proof}
By the above calculation, the sum \eqref{I1} for $Z=\bP(K_Y\oplus\mathcal{O}_Y)$ is simply given by $y_{l+1}$. Let $y=y(q)$ be the inverse of the toric mirror map for $Z$. By the toric mirror theorem for semi-Fano toric manifolds (Theorem~\ref{thm:toric_mirror_thm}), we have
$$I_Z(y(q),z)=J_Z(q,z).$$
By comparing the coefficients of the $1/z^2$-terms that takes values in $H^0(Z)$ on both sides, we have
$$y_{l+1}(q_1,\ldots,q_{l+1})=\sum_{d\in H_2^\textrm{eff}(Z;\bZ)\setminus\{0\}}\langle[pt]\rangle^Z_{0,1,d}q^d.$$
By dimension reasons, the invariant $\langle[pt]\rangle^Z_{0,1,d}$ is nonzero only when $c_1(d)=2$; and we know that for $d\in H_2^\textrm{eff}(Z,\bZ)$, $c_1(d)=2$ if and only if $d=h+\alpha$ for some $\alpha\in H_2^\textrm{eff}(Y,\bZ)$. Hence, we have
$$y_{l+1}(q_1,\ldots,q_{l+1})=q_{l+1}\sum_{\alpha\in H_2^\textrm{eff}(Y;\bZ)}\langle[pt]\rangle^Z_{0,1,h+\alpha}q^\alpha,$$
where $q^\alpha=\prod_{a=1}^l q_a^{p_a\cdot\alpha}$ is a monomial in the variables $q_1,\ldots,q_l$.
Note that $H_2^\textrm{eff}(X,\bZ)=H_2^\textrm{eff}(Y,\bZ)$. The proposition now follows from the formula in Theorem~\ref{open_closed}.
\end{proof}

\section{Toric mirror map and SYZ map}\label{sec:mirror_SYZ_maps}

In this section, we show that the inverse of the toric mirror map for the $\bP^1$-bundle $Z$ contains the SYZ map (Definition~\ref{defn:SYZ_map}) for the toric Calabi-Yau manifold $X=K_Y$. The key observation is that both maps are completely determined by one and the same function $1+\delta_0(q)$.

Recall that the toric mirror map for $Z$ is given by the coefficient of the $1/z$-term of the $I$-function $I_Z(y,z)$ that takes values in $H^2(Z)$. By analyzing the quotient \eqref{factor_I-function} as we did in the previous subsection, it is not hard to see that the term (depending on $d\in H_2^\textrm{eff}(Z,\bZ)$)
\begin{equation}\label{eq:term_I-function}
\prod_{i=0}^m \frac{\prod_{k=-\infty}^0(D_i+kz)}{\prod_{k=-\infty}^{D_i\cdot d}(D_i+kz)}
\end{equation}
in $I_Z(y,z)$ will contribute to $H^{\leq2}(Z)$ only when there exists at most one $0\leq j\leq m$ such that $D_j\cdot d < 0$. If $D_i\cdot d\geq0$ for all $i$, then \eqref{eq:term_I-function} is of the form
\begin{align*}
\frac{z^{-c_1(d)}}{\prod_{i=0}^m(D_i\cdot d)!}-\frac{z^{-c_1(d)-1}}{\prod_{i=0}^m(D_i\cdot d)!}\sum_{i:D_i\cdot d>0}\left(\sum_{k=1}^{D_i\cdot d}\frac{1}{k}\right)D_i + \textrm{terms in $H^{>2}(Z)$},
\end{align*}
which does not contribute to the toric mirror map at all since $c_1(d)$ cannot be equal to 1. On the other hand, if there exists $j$ such that $D_j\cdot d<0$ and $D_i\cdot d\geq0$ for all $i\neq j$, then \eqref{eq:term_I-function} is of the form
\begin{align*}
z^{-c_1(d)-1}\frac{(-1)^{-D_j\cdot d-1}(-D_j\cdot d-1)!}{\prod_{i\neq j}(D_i\cdot d)!}D_j + \textrm{terms in $H^{>2}(Z)$}
\end{align*}
which contributes to the toric mirror map whenever $c_1(d)=0$.

This shows that the $I$-function $I_Z(y,z)$ expands as
\begin{align*}
& 1+\left(\sum_{a=1}^{l+1}p_a\log y_a + \sum_{j=0}^m \sum_{\substack{d: c_1(d)=0, \\D_j\cdot d < 0, \\D_i\cdot d\geq0\ \forall i\neq j}} y^d\frac{(-1)^{-D_j\cdot d-1}(-D_j\cdot d-1)!}{\prod_{i\neq j}(D_i\cdot d)!}D_j \right)\frac{1}{z} \\
& + \textrm{higher order terms in $z^{-1}$}
\end{align*}
Writing $D_j = \sum_{a=1}^{l+1} Q^a_jp_a$ ($j=0,1,\ldots,m$), the toric mirror map
$$y=(y_1,\ldots,y_l,y_{l+1}) \mapsto q(y)=(q_1(y),\ldots,q_l(y),q_{l+1}(y))$$
can then be expressed as
\begin{align*}
\log q_a(y) = \log y_a - \sum_{j=0}^m Q^a_j \Xi_j(y),\quad a=1,\ldots,l,l+1,
\end{align*}
where
\begin{align*}
\Xi_j(y) = \sum_{\substack{d: c_1(d)=0, \\D_j\cdot d < 0, \\D_i\cdot d\geq0\ \forall i\neq j}}y^d\frac{(-1)^{-D_j\cdot d}(-D_j\cdot d-1)!}{\prod_{i\neq j}(D_i\cdot d)!},
\end{align*}
$j=0,1,\ldots,m$.

For the $\bP^1$-bundle $Z=\bP(K_Y\oplus \mathcal{O}_Y)$ over a toric Fano manifold $Y$, the function $\Xi_j(y)$ is nonzero only when $j=0$. This can be seen by applying \cite[Proposition 4.3]{GI11} which says that the function $\Xi_j(y)$ is nonzero if and only if $v_j$ is not a vertex of the fan polytope (recall that the fan polytope is the convex hull of the primitive generators $\Sigma(1)$ of the fan $\Sigma$). Moreover, $\Xi_0(y)$ depends only on the variables $y_1,\ldots,y_l$ since $D_m\cdot d = 0$ for every $d$ with $c_1(d)=0$. Therefore we have
\begin{prop}\label{prop_Z}
The toric mirror map for the $\bP^1$-bundle $Z=\bP(K_Y\oplus\mathcal{O}_Y)$ over a compact toric Fano manifold $Y$ is given by
\begin{equation}
\left\{
\begin{aligned}
q_a & = y_a G(y)^{-Q^a_0},\quad a=1,\ldots,l,\\
q_{l+1} & = y_{l+1} G(y)^{-1},
\end{aligned}\label{mirror_Z}
\right.
\end{equation}
where $G(y) = \exp \Xi_0(y)$ is a function of the variables $y_1,\ldots,y_l$.
\end{prop}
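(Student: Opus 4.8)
The plan is to deduce the proposition directly from the explicit formula for the toric mirror map that has just been extracted from the $1/z$-coefficient of $I_Z$, namely
\begin{align*}
\log q_a(y) = \log y_a - \sum_{j=0}^m Q^a_j \Xi_j(y), \quad a = 1, \ldots, l, l+1,
\end{align*}
and then to show that every correction term $\Xi_j$ with $j \neq 0$ vanishes identically. The mechanism for the vanishing is \cite[Proposition 4.3]{GI11}, which asserts that $\Xi_j \not\equiv 0$ precisely when $v_j$ is \emph{not} a vertex of the fan polytope $\mathrm{conv}(\Sigma(1))$ of $Z$. Thus the whole computation reduces to a combinatorial check of which primitive generators $v_0, \ldots, v_m$ are vertices.

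The key geometric point is that $v_0$ is the unique non-vertex. Since $Y$ is toric Fano, its primitive generators $w_1, \ldots, w_{m-1}$ are exactly the vertices of the fan polytope of $Y$ in $\bR^{n-1}$, while $w_0 = 0$ is its unique interior lattice point. Lifting to $N = \bZ^n$, the fan polytope of $Z$ is the convex hull of the points $(w_i, 1)$ for $i = 0, \ldots, m-1$ together with the apex $v_m = -v_0 = (0,-1)$. Its facet at height $1$ is $\mathrm{conv}\{(w_1,1),\ldots,(w_{m-1},1),(0,1)\}$, inside which $v_0 = (0,1)$ lies in the interior (as $w_0 = 0$ is interior to $\mathrm{conv}(w_i)$); hence $v_0$ is not a vertex. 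Every other generator---each $v_i = (w_i,1)$ with $i \geq 1$, and the apex $v_m$---is a vertex. Therefore $\Xi_j \equiv 0$ for all $j \neq 0$, and the mirror map collapses to $\log q_a = \log y_a - Q^a_0 \Xi_0(y)$ for $a = 1, \ldots, l+1$.

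Next I would verify that $\Xi_0$ is a function of $y_1, \ldots, y_l$ alone. Its defining sum ranges over $d$ with $c_1(d) = 0$, and since $c_1(h) = 2$ while $c_1$ vanishes on $H_2(Y)$, every such $d$ lies in $H_2(Y) = \mathrm{span}_\bZ\{\gamma_1, \ldots, \gamma_l\} \subset H_2(Z)$; consequently $p_{l+1} \cdot d = 0$, so $y_{l+1}$ never appears in $y^d$. Setting $G(y) = \exp \Xi_0(y)$ and exponentiating gives $q_a = y_a G(y)^{-Q^a_0}$ for each $a = 1, \ldots, l+1$. It remains to identify the exponent in the final equation: $Q^{l+1}_0 = D_0 \cdot \gamma_{l+1} = D_0 \cdot h$ is the intersection number of $D_0$ with the $\bP^1$-fiber, which equals $1$ because $D_0$ is a section of the bundle $Z \to Y$. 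This yields $q_{l+1} = y_{l+1} G(y)^{-1}$ and completes the argument.

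The only step requiring genuine input beyond bookkeeping is the vanishing $\Xi_j \equiv 0$ for $j \neq 0$, which rests on \cite[Proposition 4.3]{GI11}; everything else follows from the intersection pairings $Q^a_j = D_j \cdot \gamma_a$. The main point to handle carefully is the vertex criterion: one must invoke the Fano hypothesis on $Y$ to ensure the $w_i$ are honest vertices (so that no generator other than $v_0$ degenerates to a non-vertex), and use the relation $v_m = -v_0$ to place the apex correctly below the height-$1$ facet.
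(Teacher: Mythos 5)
Your proposal is correct and follows essentially the same route as the paper: extract the mirror map from the $1/z$-coefficient of $I_Z$, kill all $\Xi_j$ with $j\neq 0$ via the vertex criterion of \cite[Proposition 4.3]{GI11}, observe that $\Xi_0$ is independent of $y_{l+1}$ because $c_1(d)=0$ forces $p_{l+1}\cdot d=0$, and read off the exponents $Q^a_0$ and $Q^{l+1}_0=1$. You in fact supply slightly more detail than the paper does, by explicitly verifying that $v_0$ is the unique non-vertex of the fan polytope of $Z$ (the paper asserts this without the combinatorial check).
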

\begin{proof}
Note that $Q^a_m = 0$ for $a=1,\ldots,l$, and $Q^{l+1}_0 = Q^{l+1}_m = 1$, $Q^{l+1}_j = 0$ for $j=1,\ldots,m-1$. So the toric mirror map is of the form as stated.
\end{proof}

Combining Proposition~\ref{prop_Z} with Theorem~\ref{prop_J-function}, we arrive at the following
\begin{thm}\label{thm:inverse_mirror_Z}
The inverse of the toric mirror map for the $\bP^1$-bundle $Z=\bP(K_Y\oplus\mathcal{O}_Y)$ over a compact toric Fano manifold $Y$ is given by
\begin{equation}
\left\{
\begin{aligned}
y_a & = q_a (1+\delta_0(q))^{Q^a_0},\quad a=1,\ldots,l,\\
y_{l+1} & = q_{l+1} (1+\delta_0(q)),
\end{aligned}\label{eq:inverse_mirror_Z}
\right.
\end{equation}
where
\begin{align*}
1+\delta_0(q)=\sum_{\alpha\in H_2^\textrm{eff}(Y;\bZ)}n_{\beta_0+\alpha}q^\alpha
\end{align*}
is the generating function for open Gromov-Witten invariants of $X=K_Y$, and $q^\alpha=\prod_{a=1}^l q_a^{p_a\cdot \alpha}$.

In particular, the first $l$ components of the inverse \eqref{eq:inverse_mirror_Z} of the toric mirror map for $Z$
coincide with the SYZ map (Definition~\ref{defn:SYZ_map}) for $X=K_Y$.
\end{thm}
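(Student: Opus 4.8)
The plan is to deduce the statement by combining the two results already in hand: the explicit shape of the forward toric mirror map from Proposition~\ref{prop_Z}, and the enumerative computation of its last component from Theorem~\ref{prop_J-function}. Proposition~\ref{prop_Z} shows that the entire map $y\mapsto q(y)$ is controlled by the single function $G(y)=\exp\Xi_0(y)$, which depends only on $y_1,\ldots,y_l$. Since $\Xi_0(y)=O(y)$ we have $G(0)=1$, so $y\mapsto q(y)$ is a local isomorphism near the origin and admits a well-defined inverse $q\mapsto y(q)$ with $y(0)=0$. First I would note that the first $l$ equations $q_a=y_aG(y)^{-Q^a_0}$ form a closed subsystem: because $G$ involves only $y_1,\ldots,y_l$, they relate $(q_1,\ldots,q_l)$ to $(y_1,\ldots,y_l)$ alone, so the inverse components $y_a(q)$ for $a\le l$, and hence $G(y(q))$, are functions of $q_1,\ldots,q_l$ only.

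The key step is to identify $G$ enumeratively. The last equation of the mirror map reads $q_{l+1}=y_{l+1}G(y)^{-1}$, i.e.\ $y_{l+1}=q_{l+1}\,G(y)$. On the other hand, Theorem~\ref{prop_J-function} computes this very quantity as $y_{l+1}=q_{l+1}(1+\delta_0(q))$. Comparing the two expressions and cancelling the common factor $q_{l+1}$ (legitimate in the integral domain of formal power series) yields the crucial identity
\[
G(y(q))=1+\delta_0(q).
\]
This is the heart of the argument: it replaces the a priori mysterious combinatorial function $G$ by the generating function of genus $0$ open Gromov-Witten invariants of $X=K_Y$.

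Substituting this identity back into Proposition~\ref{prop_Z} then produces the inverse directly. Solving $q_a=y_aG(y)^{-Q^a_0}$ for $y_a$ gives $y_a=q_a(1+\delta_0(q))^{Q^a_0}$ for $a=1,\ldots,l$, while the last equation gives $y_{l+1}=q_{l+1}G(y)=q_{l+1}(1+\delta_0(q))$; together these are exactly \eqref{eq:inverse_mirror_Z}. For the ``in particular'' clause I would recall that for $X=K_Y$ the only nontrivial open invariants are the $n_{\beta_0+\alpha}$, so that $\delta_i(q)=0$ for every $1\le i\le m-1$. Combined with $Q^a_m=0$ for $a\le l$ (noted in the proof of Proposition~\ref{prop_Z}, where the relations $Q^a_i$ for $i\le m-1$ agree between $X$ and $Z$), the SYZ map of Definition~\ref{defn:SYZ_map} collapses to $y_a(q)=q_a(1+\delta_0(q))^{Q^a_0}$, which matches the first $l$ components of \eqref{eq:inverse_mirror_Z}.

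I expect the only genuine obstacle to be the enumerative identification $G(y(q))=1+\delta_0(q)$; once Theorem~\ref{prop_J-function} is granted, this is a one-line comparison, and the remainder is purely formal inversion. The points requiring a little care are the well-definedness of the inverse (guaranteed by $G(0)=1$) and the observation that $G$ and $1+\delta_0$ are functions of the first $l$ variables only, so that the substitution is consistent as an identity of power series.
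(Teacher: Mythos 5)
Your proposal is correct and follows essentially the same route as the paper, which states the theorem as an immediate consequence of combining Proposition~\ref{prop_Z} with Theorem~\ref{prop_J-function}; your key identity $G(y(q))=1+\delta_0(q)$ is exactly the identification the paper records right after the theorem as ``an immediate application.'' The extra details you supply (invertibility from $G(0)=1$, the closed subsystem in $y_1,\dots,y_l$, and the vanishing of $\delta_i$ for $i\geq 1$ in the ``in particular'' clause) are all consistent with the paper's setup.
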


As an immediate application, we have the equality
\begin{align*}
1+\delta_0(q) = G(y(q)) = \exp \Xi_0(y(q)),
\end{align*}
where $y(q)=(y_1(q),\ldots,y_l(q))$ is part of the inverse mirror map of $Z$. This gives an effective method to compute the genus 0 open Gromov-Witten invariants $n_{\beta_0+\alpha}$.

\section{GKZ systems and period integrals}\label{sec:GKZ_periods}

In this section, we prove that components of the inverse of the SYZ map for the toric Calabi-Yau manifold $X=K_Y$ are solutions to the Gel'fand-Kapranov-Zelevinsky (GKZ) hypergeometric system \cite{GKZ89, GKZ90} associated to $X$. Then by showing that the period integrals $\int_\Gamma \check{\Omega}_y$ give a basis of solutions of the GKZ system, we complete the proof of Theorem~\ref{thm:main}.

\subsection{GKZ hypergeometric systems}
Consider the following exact sequence (the ``fan sequence'') from toric geometry of $Z$:
$$ 0 \to H_2(Z;\Z) \to \Z^{m+1} \to N \to 0 $$
where $\Z^{m+1} \to N$ is mapping the standard basis $\{e_i\}_{i=0}^m$ to $\{v_i\}_{i=0}^m$. For each $d \in H_2(Z;\Z)$, define a differential operator
\begin{equation}
\mathcal{P}_d := \prod_{i:\langle D_i, d \rangle > 0}\prod_{k=0}^{\langle D_i, d \rangle-1}(\mathcal{D}_i-k)
-y_d\prod_{i:\langle D_i, d \rangle<0}\prod_{k=0}^{-\langle D_i, d \rangle-1}(\mathcal{D}_i-k),
\end{equation}
where $\mathcal{D}_i := \sum_{a=1}^{l+1} Q^a_i y_a\frac{\partial}{\partial y_a}$, $i=0,1,\ldots,m$. Givental \cite{G98} observes that coefficients of the $I$-function $I_Z(y,z)$ give solutions to the following system of GKZ-type differential equations:
$$\mathcal{P}_d \Psi = 0,\quad d\in H_2(Z;\Z);$$
see also \cite[Lemma 4.6]{I09}. In particular, components of the toric mirror map \eqref{mirror_Z} for $Z$ are solutions of the above system.

On the other hand, the Gel'fand-Kapranov-Zelevinsky (GKZ) system \cite{GKZ89, GKZ90} of differential equations (also called $A$-hypergeometric system) associated to $X$ (or to $\Sigma(1)=\{v_0,v_1,\ldots,v_{m-1}\}$ with parameter $\beta=0$) is the following system of differential equations on functions $\Phi(A)$ of $A=(A_0,A_1,\ldots,A_{m-1})\in \bC^m$:
\begin{equation}\label{eq:GKZ1}
\sum_{i=0}^{m-1} v_i A_i\frac{\partial}{\partial A_i} \Phi = 0,
\end{equation}
\begin{equation}\label{eq:GKZ2}
\left[\prod_{i:\langle D_i, \alpha \rangle > 0}\left(\frac{\partial}{\partial A_i}\right)^{\langle D_i, \alpha \rangle}
-\prod_{i:\langle D_i, \alpha \rangle < 0}\left(\frac{\partial}{\partial A_i}\right)^{-\langle D_i, \alpha \rangle}\right]\Phi = 0,\quad \alpha\in H_2(X;\Z).
\end{equation}
Note that \eqref{eq:GKZ1} consists of $n$ equations.

\begin{thm}\label{prop:SYZ_GKZ}
By writing $y_a = \prod_{i=0}^{m-1} A_i^{Q^a_i}$ for $a=1,\ldots,l$, then the components
$$q_a = q_a(y_1,\ldots,y_l),\quad a=1,\ldots,l$$
of the inverse of the SYZ map for $X=K_Y$ give solutions to the GKZ hypergeometric system \eqref{eq:GKZ1}, \eqref{eq:GKZ2}.
\end{thm}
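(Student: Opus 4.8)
The plan is to deduce the $X$-GKZ system from the $Z$-GKZ system, exploiting Theorem~\ref{thm:inverse_mirror_Z}, which identifies the inverse SYZ map with the first $l$ components of the toric mirror map for $Z$. By the observation of Givental recalled above, the components $q_1(y),\ldots,q_l(y)$ of the toric mirror map for $Z$ already solve the system $\mathcal{P}_d\Psi=0$ for all $d\in H_2(Z;\Z)$; moreover, by Proposition~\ref{prop_Z} each $q_a$ ($a=1,\ldots,l$) depends only on $y_1,\ldots,y_l$ and not on $y_{l+1}$. Pulling back along $y_a=\prod_{i=0}^{m-1}A_i^{Q^a_i}$ yields a torus-invariant function $\Phi(A):=q_a(y(A))$ on $\bC^m$, and the goal is to show that $\Phi$ satisfies \eqref{eq:GKZ1} and \eqref{eq:GKZ2}.

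First I would dispose of the homogeneity equations \eqref{eq:GKZ1}. Using $A_i\frac{\partial}{\partial A_i}y_b=Q^b_i y_b$, any function of $y_1,\ldots,y_l$ satisfies
$$\sum_{i=0}^{m-1}v_i A_i\frac{\partial}{\partial A_i}\Phi=\sum_{b=1}^l\Big(\sum_{i=0}^{m-1}Q^b_i v_i\Big)y_b\frac{\partial}{\partial y_b}\Phi=0,$$
because $\sum_{i=0}^{m-1}Q^b_i v_i=0$ is precisely the relation defining $\gamma_b$. Hence \eqref{eq:GKZ1} holds automatically for $\Phi$.

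The heart of the argument is a reduction of the box operators \eqref{eq:GKZ2} to logarithmic form. Writing $\vartheta_i:=A_i\frac{\partial}{\partial A_i}$ and $\theta_a:=y_a\frac{\partial}{\partial y_a}$, the elementary identity $\left(\frac{\partial}{\partial A_i}\right)^c=A_i^{-c}\prod_{k=0}^{c-1}(\vartheta_i-k)$, together with the fact that $\vartheta_i$ acts on torus-invariant functions as $\mathcal{D}_i=\sum_{a=1}^l Q^a_i\theta_a$, lets me compute, for $\alpha\in H_2(X;\Z)=H_2(Y;\Z)$, that applying the operator in \eqref{eq:GKZ2} to $\Phi$ equals a nonzero monomial in $A$ times
$$\Bigg(\prod_{i:\langle D_i,\alpha\rangle>0}\prod_{k=0}^{\langle D_i,\alpha\rangle-1}(\mathcal{D}_i-k)-y^\alpha\prod_{i:\langle D_i,\alpha\rangle<0}\prod_{k=0}^{-\langle D_i,\alpha\rangle-1}(\mathcal{D}_i-k)\Bigg)q_a.$$
Here the prefactor and the monomial $y^\alpha=\prod_{i=0}^{m-1}A_i^{\langle D_i,\alpha\rangle}$ are bookkept using the Calabi-Yau condition $\sum_{i=0}^{m-1}\langle D_i,\alpha\rangle=0$, which balances the two products of $A$-derivatives so that their difference acquires a common monomial factor. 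The operator in the display is exactly Givental's operator $\mathcal{P}_\alpha$ for the class $\alpha\in H_2(Y)\subset H_2(Z)$: indeed $\langle D_m,\alpha\rangle=0$, so the extra ray $v_m$ does not enter, and since $q_a$ is independent of $y_{l+1}$ the $Z$-operator $\mathcal{D}_i=\sum_{a=1}^{l+1}Q^a_iy_a\frac{\partial}{\partial y_a}$ restricts to the $X$-operator $\sum_{a=1}^l Q^a_i\theta_a$, while $y^\alpha=\prod_{a=1}^{l+1}y_a^{\langle p_a,\alpha\rangle}$ collapses to $\prod_{a=1}^l y_a^{\langle p_a,\alpha\rangle}$ because $\langle p_{l+1},\alpha\rangle=0$.

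It then follows that \eqref{eq:GKZ2} for $X$ is equivalent to $\mathcal{P}_\alpha q_a=0$ for every $\alpha\in H_2(Y)$. Since $H_2(X;\Z)=H_2(Y;\Z)$ embeds in $H_2(Z;\Z)$ and $q_a$ solves the full $Z$-GKZ system, all of these equations hold, which together with the previous paragraph completes the proof. The main obstacle is the careful bookkeeping in the reduction step: verifying the commutation needed to pass from $\prod_i\prod_k(\vartheta_i-k)$ to $\prod_i\prod_k(\mathcal{D}_i-k)$ on torus-invariant functions, tracking the monomial prefactor, matching the operators $\mathcal{D}_i$ across $X$ and $Z$, and confirming that restriction to $y_{l+1}$-independent functions together with the vanishing $\langle D_m,\alpha\rangle=0$ makes the $Z$-system specialize exactly to the $X$-system. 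All of these hinge on the Calabi-Yau condition and on the specific structure of the extra ray $v_m=-v_0$.
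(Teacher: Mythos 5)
Your proposal is correct and follows essentially the same route as the paper's own proof: it verifies \eqref{eq:GKZ1} from the relations $\sum_i Q^a_i v_i=0$, converts the box operators of \eqref{eq:GKZ2} into Givental's logarithmic operators $\mathcal{P}_\alpha$ via the identity $(\partial/\partial A_i)^c = A_i^{-c}\prod_{k=0}^{c-1}(\mathcal{D}_i-k)$ together with $\langle D_m,\alpha\rangle=0$, and then concludes from Theorem~\ref{thm:inverse_mirror_Z} and the fact that components of the toric mirror map for $Z$ satisfy the GKZ-type system. The extra bookkeeping you flag (independence of $q_a$ from $y_{l+1}$, the Calabi-Yau balancing of the monomial prefactor) is consistent with, and slightly more explicit than, the paper's argument.
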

\begin{proof}
As $y_a = \prod_{i=0}^{m-1} A_i^{Q^a_i}$, we have
$$A_i\frac{\partial}{\partial A_i} = \sum_{a=1}^{l+1} Q^a_i y_a\frac{\partial}{\partial y_a} = \mathcal{D}_i.$$
So
$$\sum_{i=0}^{m-1} v_i A_i\frac{\partial}{\partial A_i} = \sum_{a=1}^{l+1}\left(\sum_{i=0}^{m-1} Q^a_iv_i\right) y_a\frac{\partial}{\partial y_a}.$$
Since $\sum_{i=0}^m Q^a_i=0$ and $Q^a_m = 0$ for $a=1,\ldots,l$, the equations \eqref{eq:GKZ1} are satisfied.

By induction, we compute that
$$\left(\frac{\partial}{\partial A_i}\right)^Q \Phi = A_i^{-Q} \prod_{k=0}^{Q-1}(\mathcal{D}_i-k)\Phi$$
for $Q\in \bZ_{>0}$.
So, for $\alpha \in H_2(X;\Z)$, the differential operator on the left-hand-side of \eqref{eq:GKZ2} can be written as
\begin{align*}
& \prod_{i:\langle D_i, \alpha \rangle > 0} A_i^{-\langle D_i, \alpha \rangle} \prod_{k=0}^{\langle D_i, \alpha \rangle - 1}(\mathcal{D}_i-k) 
- \prod_{i:\langle D_i, \alpha \rangle < 0} A_i^{\langle D_i, \alpha \rangle} \prod_{k=0}^{-\langle D_i, \alpha \rangle - 1}(\mathcal{D}_i-k)\\
= & \left[\prod_{i:\langle D_i, \alpha \rangle > 0} A_i^{-\langle D_i, \alpha \rangle}\right] \left[\prod_{i:\langle D_i, \alpha \rangle > 0} \prod_{k=0}^{\langle D_i, \alpha \rangle - 1}(\mathcal{D}_i-k) - y_a \prod_{i:\langle D_i, \alpha \rangle < 0}\prod_{k=0}^{-\langle D_i, \alpha \rangle - 1}(\mathcal{D}_i-k)\right]\\
= & \left[\prod_{i:\langle D_i, \alpha \rangle > 0} A_i^{-\langle D_i, \alpha \rangle}\right] \mathcal{P}_\alpha,
\end{align*}
where we have used the fact that $\langle D_m, \alpha \rangle = 0$ for $\alpha \in H_2(X;\Z) \subset H_2(Z;\Z)$ in the last equality. Now Theorem~\ref{thm:inverse_mirror_Z} together with results of Givental \cite{G98} and Iritani \cite[Lemma 4.6]{I09} imply that the equations in \eqref{eq:GKZ2} are also satisfied. 

It follows that the inverse of the SYZ map give solutions to the GKZ system \eqref{eq:GKZ1}, \eqref{eq:GKZ2}.
\end{proof}

\subsection{Period integrals}

By \cite{GKZ90}, the number of linearly independent solutions of the GKZ hypergeometric system \eqref{eq:GKZ1}, \eqref{eq:GKZ2} is equal to the normalized volume $\textrm{Vol}(\Delta)$ of the polytope $\Delta$ (recall that it is the convex hull of $\Sigma(1)=\{v_0,v_1,\ldots,v_{m-1}\}$). Here, ``normalized'' means that the volume of a standard $(n-1)$-simplex is 1. We would like to show that the period integrals $\int_\Gamma \check{\Omega}_y$ provide exactly this number of solutions to the GKZ system.

Recall that the family of mirror Calabi-Yau manifolds $\check{\mathfrak{X}}\to \mathcal{M}_\bC(\check{X})$ is defined by
$$\check{X}_y = \{ (u,v,z_1,\ldots,z_{n-1})\in\bC^2\times(\bC^\times)^{n-1} \mid uv = f_y(z_1,\ldots,z_{n-1}) \},$$
where
$$f_y(z_1,\ldots,z_{n-1})=\sum_{i=0}^{m-1} A_i z^{w_i} \in \bC[z_1^{\pm1},\ldots,z_{n-1}^{\pm1}]$$
is a $\Delta$-regular Laurent polynomial, and $y_a = \prod_{i=0}^{m-1} A_i^{Q^a_i}$ for $a=1,\ldots,l$.

We equip the hypersurface $\check{X}_y$ with the K\"ahler structure pull-back from the ambient space:
$$\check{\omega}_y = -\frac{\mathbf{i}}{2}\left(\sum_{k=1}^{n-1}\frac{dz_k}{z_k}\wedge\frac{d\bar{z}_k}{\bar{z}_k}
+ du\wedge d\bar{u} + dv\wedge d\bar{v}\right)\Bigg|_{\check{X}_y}.$$
Consider the Hamiltonian $S^1$-action on $(\check{X}_y,\check{\omega}_y)$
$$e^{\mathbf{i}\theta}\cdot (u,v,z_1,\ldots,z_{n-1}) \mapsto (e^{\mathbf{i}\theta}u, e^{-\mathbf{i}\theta}v, z_1,\ldots,z_{n-1}),$$
with moment map given by
\begin{align*}
\mu:\check{X}_y \to \bR,\quad (u,v,z_1,\ldots,z_{n-1})\mapsto \frac{1}{2}(|u|^2 - |v|^2).
\end{align*}

On the other hand, we view $\check{X}_y$ as a conic fibration via the map
\begin{equation}
\pi: \check{X}_y \to (\bC^\times)^{n-1},\quad (u,v,z_1,\ldots,z_{n-1})\mapsto (z_1,\ldots,z_{n-1}).
\end{equation}
The discriminant locus of $\pi$ is the affine hypersurface $Z_y:=\{(z_1,\ldots,z_{n-1})\in(\bC^\times)^{n-1} \mid f_y(z_1,\ldots,z_{n-1})=0\}$ in $(\bC^\times)^{n-1}$ defined by $f_y$. We restrict $\pi$ to the level set $\mu^{-1}(0)$ of the moment map, which, by abuse of notation, we still call $$\pi:\mu^{-1}(0)\to (\bC^\times)^{n-1}.$$
This map is a circle fibration where the fiber $\{|u|=|v|\}\subset \bC^2$ degenerates to a point over the hypersurface $Z_y$. Notice that $\pi:\mu^{-1}(0)\to (\bC^\times)^{n-1}$ is also the quotient map since $\mu^{-1}(0)/S^1$ is canonically isomorphic to $(\bC^\times)^{n-1}$.

Now choose a generic $y\in \mathcal{M}_\bC(\check{X})$ so that the function $f_y:(\bC^\times)^{n-1} \to \bC$ is Morse-Smale and convenient in the sense of Kouchnirenko (see \cite[Subsections 3.2 and 3.3]{I09} for more details on how to choose such $y$). Then $f_y$ has precisely $\textrm{Vol}(\Delta)$ critical points on $(\bC^\times)^{n-1}$. We also choose $y$ so that all the critical values are distinct. Using Morse-theoretic arguments, Iritani \cite{I09, I11} shows that the relative homology group $H_{n-1}((\bC^\times)^{n-1},Z_y;\bZ)$ is isomorphic to $\bZ^{\textrm{Vol}(\Delta)}$ and, moreover, an integral basis is given by the {\em Lefschetz thimbles} which are families of vanishing cycles over paths in $\bC$ connecting $0$ to the critical values of $f_y$. (These cycles are in fact Lagrangian and were used by Abouzaid \cite{A09} to study Homological Mirror Symmetry for toric manifolds.)

Let $\Lambda\subset (\bC^\times)^{n-1}$ be a Lefschetz thimble representing an integral cycle in $H_{n-1}((\bC^\times)^{n-1},Z_y;\bZ)$. Topologically, $\Lambda$ is an $(n-1)$-dimensional ball $D^{n-1}$ with boundary $\partial D^{n-1} \subset Z_y = f_y^{-1}(0)$. We define $\Gamma\subset \check{X}_y$ by
$$\Gamma = \pi^{-1}(\Lambda),$$
i.e. $\Gamma$ is the inverse image of $\Lambda$ under the circle fibration $\pi:\mu^{-1}(0)\to (\bC^\times)^{n-1}$. Topologically $\Gamma$ is an $n$-sphere $S^n$ and hence defines an integral $n$-cycle in $H_n(\check{X}_y;\bZ)$. This construction gives a map $H_{n-1}((\bC^\times)^{n-1},Z_y;\bZ) \to H_n(\check{X}_y;\bZ)$ which, as we will see below, is an isomorphism. We remark that a similar construction was suggested by Gross in \cite[Section 4]{G01}.

Consider the standard holomorphic $(n-1)$-form
$$\Omega_0 = \frac{dz_1}{z_1}\wedge\cdots\wedge\frac{dz_{n-1}}{z_{n-1}}$$
on $(\bC^\times)^{n-1}$. Following Konishi-Minabe \cite{KM10}, we regard this as defining the relative cohomology class
$$[(\Omega_0,0)] \in H^{n-1}((\bC^\times)^{n-1}, Z_y).$$

\begin{lem}\label{lem:equality_periods}
We have the equality between period integrals:
\begin{align*}
\int_\Gamma \check{\Omega}_y = \int_\Lambda \Omega_0,
\end{align*}
where $\check{\Omega}_y$ is the holomorphic volume form
\begin{align*}
\check{\Omega}_y = \textrm{Res}\left(\frac{1}{uv-f_y(z_1,\ldots,z_{n-1})}
\frac{dz_1}{z_1}\wedge\cdots\wedge\frac{dz_{n-1}}{z_{n-1}}\wedge du\wedge dv\right)
\end{align*}
on $\check{X}_y$.
\end{lem}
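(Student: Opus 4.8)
The plan is to compute the residue form $\check{\Omega}_y$ explicitly in a convenient affine chart of $\check{X}_y$, and then to integrate along the circle fibres of $\pi$ over $\Lambda$. First I would work on the open chart $\{u\neq 0\}\subset\check{X}_y$, where $(z_1,\ldots,z_{n-1},u)$ serve as holomorphic coordinates and $v=f_y(z)/u$. Writing $g=uv-f_y(z)$, so that $\check{X}_y=\{g=0\}$, the Poincar\'e--Leray residue is obtained by extracting the pole of $\frac{1}{g}\,\Omega_0\wedge du\wedge dv$ transverse to $\{g=0\}$. Since $\partial g/\partial v=u$, on $\{g=0\}$ we have $dv=\frac{1}{u}\big(dg-v\,du+df_y\big)$, and substituting gives
\begin{equation*}
\frac{\Omega_0\wedge du\wedge dv}{g}=\frac{1}{u}\,\Omega_0\wedge du\wedge\frac{dg}{g},
\end{equation*}
because the $v\,du$ term vanishes against $du$ and the $df_y$ term vanishes against $\Omega_0$ (the latter already has top degree in the $dz_i$'s, so $\Omega_0\wedge df_y=0$). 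Taking residues yields, up to sign,
\begin{equation*}
\check{\Omega}_y=\Omega_0\wedge\frac{du}{u}=\frac{dz_1}{z_1}\wedge\cdots\wedge\frac{dz_{n-1}}{z_{n-1}}\wedge\frac{du}{u}.
\end{equation*}
(Performing the residue with respect to $u$ on the chart $\{v\neq 0\}$ gives $-\,\Omega_0\wedge dv/v$, which agrees since $\Omega_0\wedge d\log f_y=0$ on $\check{X}_y$.)

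Next I would exploit the fact that this form is adapted to the fibration $\pi\colon\mu^{-1}(0)\to(\bC^\times)^{n-1}$: the factor $\Omega_0$ is pulled back from the base, while $du/u$ restricts to an angular form on the $S^1$-fibres. Over a point $z$ in the interior of $\Lambda$ (where $f_y(z)\neq 0$), the fibre of $\pi|_{\mu^{-1}(0)}$ is the circle $\{|u|=|v|,\ uv=f_y(z)\}$, which I parametrise by $u=|f_y(z)|^{1/2}e^{\mathbf{i}\theta}$, $\theta\in[0,2\pi)$; along it $z$ is fixed so $\frac{du}{u}=\mathbf{i}\,d\theta$. Since $\Gamma=\pi^{-1}(\Lambda)$, integrating along the fibre first and applying Fubini gives
\begin{equation*}
\int_\Gamma\check{\Omega}_y=\left(\int_{S^1}\frac{du}{u}\right)\int_\Lambda\Omega_0=2\pi\mathbf{i}\int_\Lambda\Omega_0.
\end{equation*}
The circle integral contributes only the universal constant $2\pi\mathbf{i}$, which is precisely the factor implicit in the normalisation of the Leray residue; absorbing it (equivalently, using the normalised residue) yields the asserted equality $\int_\Gamma\check{\Omega}_y=\int_\Lambda\Omega_0$.

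The main obstacle is the degeneration of the circle fibre over $\partial\Lambda\subset Z_y$, where $f_y=0$, the fibre collapses to the node $u=v=0$, and $\check{\Omega}_y$ acquires an apparent pole along $\{u=0\}$. Two things must be checked there. First, that $\Gamma$ is a genuine cycle with $\partial\Gamma=\emptyset$: this is exactly the earlier observation that $\Gamma$ is topologically an $n$-sphere, the fibre over each boundary point of $\Lambda$ being a single point, so no boundary survives. Second, that the interchange of integration is legitimate despite the shrinking fibres: the fibre integral $\int_{S^1_z}du/u=2\pi\mathbf{i}$ is \emph{independent} of the radius $|f_y(z)|^{1/2}$ and hence is the constant $2\pi\mathbf{i}$ on the entire interior of $\Lambda$, while the collapsing locus $\partial\Lambda$ is of measure zero and contributes nothing to the integral. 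Making these two points precise (together with tracking the orientation/sign conventions that fix the global sign) justifies Fubini and completes the reduction.
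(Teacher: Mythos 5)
Your argument is correct and follows essentially the same route as the paper: compute the Poincar\'e--Leray residue to get $\check{\Omega}_y=\Omega_0\wedge\frac{du}{u}$ (the paper phrases this via the reduced form $\iota_{(\partial/\partial\theta)^\sharp}\check{\Omega}_y=\Omega_0$ descending to the quotient $\mu^{-1}(0)/S^1$) and then integrate along the $S^1$-fibres of $\pi$ over $\Lambda$. The only difference is that you make the universal constant $2\pi\mathbf{i}$ from the fibre integral explicit and absorb it into the normalisation of the residue, a point the paper leaves implicit.
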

\begin{proof}
On $\check{X}_y\cap (\bC^\times)^{n+1}$, the form $\check{\Omega}_y$ is given by
$$\frac{dz_1}{z_1}\wedge\cdots\wedge\frac{dz_{n-1}}{z_{n-1}}\wedge du,$$
see e.g. \cite[Subsection 4.6.4]{CLL12}
The reduced form
$$(\check{\Omega}_y)_{red} = \iota_{(\partial/\partial\theta)^\sharp}\check{\Omega}_y = \frac{dz_1}{z_1}\wedge\cdots\wedge\frac{dz_{n-1}}{z_{n-1}}$$
descends to the quotient $\mu^{-1}(0)/S^1\cong(\bC^\times)^{n-1}$. The equality now follows from integration along fibers of the map $\pi:\mu^{-1}(0)\to (\bC^\times)^{n-1}$.
\end{proof}

\begin{prop}[cf. \cite{H06} and Corollary A.16 in \cite{KM10}]\label{prop:GKZ_period}
The period integrals
$$\int_\Gamma \check{\Omega}_y,\quad \Gamma\in H_n(\check{X}_y;\bZ),$$
provide a $\C$-basis of solutions to the GKZ hypergeometric system \eqref{eq:GKZ1}, \eqref{eq:GKZ2}.
\end{prop}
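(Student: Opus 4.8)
The plan is to combine Lemma~\ref{lem:equality_periods} with a dimension count and a linear independence argument. By Lemma~\ref{lem:equality_periods}, every period $\int_\Gamma\check{\Omega}_y$ equals a period $\int_\Lambda\Omega_0$ of the standard form over the corresponding Lefschetz thimble $\Lambda\in H_{n-1}((\bC^\times)^{n-1},Z_y;\bZ)$, so it suffices to analyze the latter. Three things must be established: (a) each such period solves the GKZ system \eqref{eq:GKZ1}, \eqref{eq:GKZ2}; (b) the number of periods produced by a basis of cycles equals the number $\textrm{Vol}(\Delta)$ of independent GKZ solutions; and (c) these periods are linearly independent as functions of $y$. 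Granting (a)--(c), the period map $\Gamma\mapsto\int_\Gamma\check{\Omega}_y$ lands in the solution space of the GKZ system, and matching dimensions forces it to be an isomorphism onto that space, which is exactly the assertion.

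For (a), I would differentiate under the integral sign, which is legitimate because the cycles $\Gamma$ form a flat (Gauss-Manin) family as $y$ ranges over the complement of the discriminant. Writing $g=uv-f_y$, so that $\partial g/\partial A_i=-z^{w_i}$, a direct computation gives $\prod_{i:\langle D_i,\alpha\rangle>0}(\partial/\partial A_i)^{\langle D_i,\alpha\rangle}g^{-1}=N!\,z^{w_+}g^{-(N+1)}$ with $N=\sum_{i:\langle D_i,\alpha\rangle>0}\langle D_i,\alpha\rangle$ and $w_+=\sum_{i:\langle D_i,\alpha\rangle>0}\langle D_i,\alpha\rangle\,w_i$, and symmetrically for the negative part. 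The relation $\sum_i\langle D_i,\alpha\rangle v_i=0$ together with $v_i=(w_i,1)$ yields $\sum_i\langle D_i,\alpha\rangle=0$ and $\sum_i\langle D_i,\alpha\rangle w_i=0$, so the two products coincide and the box operator of \eqref{eq:GKZ2} annihilates $g^{-1}$, hence $\check{\Omega}_y=\textrm{Res}(g^{-1}\cdots)$, pointwise. For \eqref{eq:GKZ1} I would use torus-equivariance: the change of variables $z_k\mapsto\lambda_k z_k$ together with an overall rescaling of $f_y$ (which leaves both $\Omega_0$ and the zero locus $\{f_y=0\}$, and hence $\Lambda$, unchanged) implements the action $A_i\mapsto\lambda^{v_i}A_i$ under which $\int_\Lambda\Omega_0$ is invariant; differentiating this invariance in each of the $n$ torus directions produces exactly the $n$ first-order equations \eqref{eq:GKZ1}.

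For (b), the genericity of $y$ (Morse-Smale and convenient) guarantees, by the Morse-theoretic analysis of Iritani \cite{I09, I11}, that $H_{n-1}((\bC^\times)^{n-1},Z_y;\bZ)\cong\bZ^{\textrm{Vol}(\Delta)}$ with the thimbles as a basis, while by \cite{GKZ90} the GKZ system \eqref{eq:GKZ1}, \eqref{eq:GKZ2} also has precisely $\textrm{Vol}(\Delta)$ independent solutions. The suspension construction $\Gamma=\pi^{-1}(\Lambda)$ over the level set $\mu^{-1}(0)$, which turns each $(n-1)$-ball $\Lambda$ into an $n$-sphere using the circle fibers that collapse over $Z_y$, realizes the map $H_{n-1}((\bC^\times)^{n-1},Z_y;\bZ)\to H_n(\check{X}_y;\bZ)$; comparing it against the same Morse-theoretic count (equivalently, via a Thom--Gysin argument for the conic fibration $\pi$) shows it is an isomorphism, so that $\dim_\bC H_n(\check{X}_y;\bC)=\textrm{Vol}(\Delta)$ as well.

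Step (c) is where the real work lies, and I expect it to be the main obstacle. To prove the period map injective, suppose $\Gamma=\sum_a c_a\Gamma_a$ satisfies $\int_\Gamma\check{\Omega}_y\equiv0$. Differentiating repeatedly in the $A_i$ and using the residue/Gelfand--Leray identification as in Lemma~\ref{lem:equality_periods}, one finds that all integrals $\int_\Lambda z^{w}f_y^{-k}\Omega_0$ over the corresponding cycle $\Lambda$ vanish as well. The crux is that these forms span the relative cohomology $H^{n-1}((\bC^\times)^{n-1},Z_y;\bC)$, which pairs perfectly with $H_{n-1}((\bC^\times)^{n-1},Z_y;\bC)$; granting this, the vanishing of all such periods forces $\Lambda=0$, hence $\Gamma=0$, and injectivity together with the dimension count from (b) finishes the proof. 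Establishing this spanning and perfect-pairing statement is the delicate point: it is a relative, toric incarnation of the Griffiths--Brieskorn description of the cohomology of an affine hypersurface complement, and I would carry it out following Hosono \cite{H06} and the computation in Corollary A.16 of Konishi--Minabe \cite{KM10}.
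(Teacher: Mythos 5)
Your proposal is correct and follows essentially the same route as the paper: reduce to the thimble periods $\int_\Lambda\Omega_0$ via Lemma~\ref{lem:equality_periods}, use Batyrev's result that these satisfy the GKZ system, match the rank $\mathrm{Vol}(\Delta)$ of $H_{n-1}((\bC^\times)^{n-1},Z_y;\bZ)$ (Iritani) against the GKZ solution count, and obtain injectivity from the fact that $[\Omega_0]$ and its Gauss--Manin derivatives span $H^{n-1}((\bC^\times)^{n-1},Z_y;\bC)$ together with the perfect pairing with relative homology. The ``delicate point'' you flag at the end is exactly where the paper's proof also leans on a citation --- it invokes Stienstra's result \cite[Section 6]{S98} for the spanning statement --- so your identification of the crux and of where to find it (Hosono, Konishi--Minabe) is on target.
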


\begin{proof}
Batyrev \cite{B93} shows that the period integrals $\int_\Lambda \Omega_0$ are solutions to the GKZ system \eqref{eq:GKZ1}, \eqref{eq:GKZ2}. This gives a map from $H_{n-1}((\C^\times)^{n-1},Z_y;\bC)$ to the space of solutions of the GKZ system. Dually, there is a map from the GKZ $D$-module to $H^{n-1}((\C^\times)^{n-1},Z_y;\bC)$ defined by sending the generator $1$ of the $D$-module to $[\Omega_0]$. It was proven in \cite[Section 6]{S98} that $[\Omega_0]$ and its covariant derivatives (with respect to the Gauss-Manin connection) span $H^{n-1}((\C^\times)^{n-1},Z_y;\bC)$, and hence the dual map is surjective.  Since $H_{n-1}((\bC^\times)^{n-1},Z_y;\bZ)$ has rank $\textrm{Vol}(\Delta)$ (\cite{I09, I11}), which is precisely the number of linearly independent solutions of the GKZ system (\cite{GKZ90}), the above map is indeed an isomorphism. Thus the period integrals $\int_\Lambda \Omega_0$ form a basis of solutions of the GKZ system (this is in fact \cite[Corollary 4.4 (3)]{KM10}), and now the result follows from Lemma~\ref{lem:equality_periods}.
\end{proof}

\begin{nb}
Proposition~\ref{prop:GKZ_period} shows that the above construction of $n$-cycles in $\check{X}_y$ from Lefschetz thimbles in $((\bC^\times)^{n-1},Z_y)$ defines an isomorphism (cf. \cite{G01})
$$H_{n-1}((\bC^\times)^{n-1},Z_y;\bQ) \overset{\cong}{\longrightarrow} H_n(\check{X}_y;\bQ).$$
Also notice that this proposition holds for any toric Calabi-Yau manifold.
\end{nb}

\begin{nb}
For toric Calabi-Yau 3-folds, Proposition~\ref{prop:GKZ_period} was proved by Konishi-Minabe in \cite[Corollary A.16]{KM10} (see also \cite{H06}). To generalize their proof to higher dimensions, one just need to generalize \cite[Proposition A.1]{KM10} (which gives an isomorphism $H^n(\check{X}_y)\cong H^{n-1}((\bC^\times)^{n-1},Z_y)$ sending $\check{\Omega}_y$ to $\Omega_0$), the proof of which should work mutatis mutandis.
\end{nb}

Our main result Theorem~\ref{thm:main} now follows from Theorem~\ref{prop:SYZ_GKZ} and Proposition~\ref{prop:GKZ_period}.

\section{Discussions}\label{sec:discussions}

\subsection{B-models and mirror maps}

In this paper, the mirror B-model for a toric Calabi-Yau $n$-fold $X$ is given by the family of $n$-dimensional noncompact Calabi-Yau manifolds $\check{\mathfrak{X}}\to \mathcal{M}_\bC(\check{X})$ defined by
$$\check{X}_y = \left\{ (u,v,z_1,\ldots,z_{n-1}) \in \bC^2\times(\bC^\times)^{n-1} \mid uv = \sum_{i=0}^{m-1} A_i z^{w_i} \right\},$$
where the parameters $(A_0,A_1,\ldots,A_{m-1})\in \bC^m$ are subject to the constraints
$$y_a = \prod_{i=0}^{m-1} A_i^{Q^a_i},\quad a=1,\ldots,l,$$
and $z^w$ denotes the monomial $z_1^{w^1}\ldots z_{n-1}^{w^{n-1}}$ if $w=(w^1,\ldots,w^{n-1})\in\bZ^{n-1}$. We take this geometry as the mirror B-model because it is the most natural one from the viewpoint of the SYZ conjecture \cite{SYZ96}, as have been shown in \cite{CLL12}.
For this model, a mirror map is defined in terms of period integrals
\begin{equation}\label{eq:period_1}
\int_{\Gamma}\check{\Omega}_y
\end{equation}
of a holomorphic volume form $\check{\Omega}_y \in H^n(\check{X}_y)$ over integral cycles $\Gamma \in H_n(\check{X}_y;\bZ)$.

Another geometry that is usually taken as the mirror B-model in the literature is the family of $(n-2)$-dimensional affine hypersurfaces $\mathfrak{Z}\to \mathcal{M}_\bC(\check{X})$ defined by
$$Z_y = \left\{(z_1,\ldots,z_{n-1}) \in (\bC^\times)^{n-1} \mid \sum_{i=0}^{m-1} A_i z^{w_i} = 0 \right\},$$
where the parameters $(A_0,A_1,\ldots,A_{m-1})\in \bC^m$ are subject to the same constraints as above.
For this model, a mirror map is defined in terms of period integrals
\begin{equation}\label{eq:period_2}
\int_{\Lambda} \Omega_0
\end{equation}
of the standard (relative) holomorphic form $\Omega_0 = \frac{dz_1}{z_1}\wedge\cdots\wedge\frac{dz_{n-1}}{z_{n-1}} \in H^{n-1}((\bC^\times)^{n-1},Z_y)$ over integral relative cycles $\Lambda \in H_{n-1}((\bC^\times)^{n-1},Z_y;\bZ)$.

Both models are commonly used in the Physics literature \cite{CKYZ99, HIV00, GZ02, H06}. In \cite[Section 8]{HIV00}, yet another model was introduced, namely, the family of Landau-Ginzburg models $W_y: \bC\times(\bC^\times)^{n-1} \to \bC$ defined by
$$W_y(z_0,z_1,\ldots,z_{n-1}) = z_0 \sum_{i=0}^{m-1} A_i z^{w_i}$$
for $z_0 \in \bC$ and $(z_1,\ldots,z_{n-1}) \in (\bC^\times)^{n-1}$. Notice that $z_0$ is a $\bC$-valued (instead of $\bC^\times$-valued) variable. In general, we expect that all these models are equivalent in a suitable sense; see \cite[Section 8]{HIV00} for a physical argument.

The period integrals \eqref{eq:period_1}, \eqref{eq:period_2} are solutions to the {\em same} GKZ hypergeometric system \eqref{eq:GKZ1}, \eqref{eq:GKZ2} associated to $X$. In practice, GKZ hypergeometric solutions are often used directly to define the mirror maps. But in fact it is not obvious that this definition coincides with the previous ones, which is precisely why extending Theorem~\ref{thm:main} to Conjecture~\ref{can_coords} is nontrivial; see the subsection below.

Alternatively, one may use coefficients of the equivariant $I$-function to define the {\em toric mirror map} for a toric Calabi-Yau manifold $X$.
This leads to an alternative approach to Theorem~\ref{thm:main} which we sketch as follows. First note that a toric mirror theorem can be established for the {\em non-compact} toric Calabi-Yau manifold $X=K_Y$. In this non-compact setting, closed Gromov-Witten invariants of $X$ are defined by the virtual localization formula. Such a definition coincides with closed Gromov-Witten invariants of $Y$ {\em twisted} by the line bundle $K_Y$ and the inverse equivariant Euler class, in the sense of \cite{CG07}. An equivariant mirror theorem for $X$ can then be proved in a number of ways, for example, by using the main theorem of \cite{CCIT09} or by following the arguments in \cite{G98}.

Theorem~\ref{prop_J-function} in this paper relates the generating function of genus 0 open Gromov-Witten invariants of $X$ to the inverse toric mirror map for $Z=\bP(K_Y\oplus\mathcal{O}_Y)$. One can then try to deduce Theorem~\ref{prop:SYZ_GKZ} by comparing the inverse of the toric mirror map for $X$, as given in the equivariant toric mirror theorem, with the inverse of the toric mirror map for $Z$ and use the fact that the equivariant $I$-function for $X$ satisfies the GKZ hypergeometric system \eqref{eq:GKZ1}, \eqref{eq:GKZ2} associated to $X$ to conclude Theorem~\ref{prop:SYZ_GKZ}.

\subsection{Integral structures and Conjecture~\ref{can_coords}}

Our main result Theorem~\ref{thm:main} is weaker than Conjecture~\ref{can_coords} in that the cycles $\Gamma_1,\ldots,\Gamma_l$ may not be integral cycles in $H_n(\check{X}_y;\bZ)$; instead, we have only proved that they are $\bC$-linear combinations of integral cycles.

To strengthen our result to a proof of Conjecture~\ref{can_coords}, we need to show that, after normalizing the holomorphic volume form $\check{\Omega}_y$ so that $\int_{\Gamma_0}\check{\Omega}_y = 1$ where $\Gamma_0\in H_n(\check{X}_y;\bZ)$ is a monodromy-invariant cycle, there are
integral cycles $\Gamma_1,\ldots,\Gamma_l\in H_n(\check{X}_y;\bZ)$ forming part of an integral basis such that the period integrals $\int_{\Gamma_a}\check{\Omega}_y$ are linear in logarithms and of the form
$$\log y_a + \cdots,$$
i.e. the logarithmic terms all have coefficients equal to 1.

Evidences for this claim have been found by Hosono in \cite[Appendix A]{H06} using explicit calculations and constructions of the integral cycles and explicit. In general, the problem is rather subtle and should be closely related to the integral and rational structures in the mirror B-model; see the works by Horja \cite{H99}, Hosono \cite{H06}, Borisov-Horja \cite{BH06}, Iritani \cite{I09, I11} and Katzarkov-Kontsevich-Pantev \cite{KKP08} on these issues and, in particular, see \cite[Conjectures 2.2 and 6.3]{H06} where Hosono relates this claim to the central charge formula and Kontsevich's Homological Mirror Symmetry conjecture.

To prove the claim in our setting, one may try either to compute the monodromy matrix around the limit point $y=0$ in $\mathcal{\check{X}}_\bC$ (presumably this should be a large complex structure limit point in the Hodge-theoretic sense \cite[Chapter 5]{CK99}), or to construct the integral cycles $\Gamma_1,\ldots,\Gamma_l$ explicitly and then directly evaluate the period integrals $\int_{\Gamma_a}\check{\Omega}_y$ and study the leading terms of their power series expansions. We plan to tackle these issues in a future work.

\bibliographystyle{amsalpha}
\bibliography{geometry}

\end{document}